\documentclass[12pt, twoside, leqno]{article}
% Modif. March 12, 2013
% Send comments to publ@impan.pl
% Using pdflatex is preferred

\usepackage{amsmath,amsthm}
\usepackage{amssymb}
\usepackage[T1]{fontenc}
%% Optional, but useful:
\usepackage{enumerate}

%% In the running head, replace first names by initials 
%% and give an abbreviation of the title.

\pagestyle{myheadings}
\markboth{P.\ Sawyer}{A Laplace-type representation for spherical functions of type $BC$}

%%%%%

%% Numbered objects of "theorem" style (text italicized).
%% Below, the optional parameters indicate that all objects are numbered together, and "by section".
%% However, you are welcome to use any other numbering system of your choice, as well as your own abbreviations.

\newtheorem{theorem}{Theorem}
\newtheorem{proposition}[theorem]{Proposition}	
\newtheorem{remark}[theorem]{Remark}
\newtheorem{lemma}[theorem]{Lemma}
\newtheorem{definition}[theorem]{Definition}
\newtheorem{corollary}[theorem]{Corollary}

%%%%%%%%%%% For IMPAN journals:

\frenchspacing

\textwidth=13.5cm
\textheight=23cm
\parindent=16pt
\oddsidemargin=-0.5cm
\evensidemargin=-0.5cm
\topmargin=-0.5cm

%%%%%%%%%%%%%%%%%%%%%%%%%%%%%%%%%%%
%%%%%%%%%%%%%%%%%%%%%%%%%%%%%%%%%%%

%%%% Put your macros here:

\def\SU{{\bf SU}}
\def\Sp{{\bf Sp}}
\def\U{{\bf U}}
\def\SO{{\bf SO}}

\DeclareMathOperator{\tr}{tr}
\DeclareMathOperator{\Ad}{Ad}
\DeclareMathOperator{\diag}{diag}

\def\a{\mathfrak{a}}
\def\kk{\mathfrak{k}}
\def\p{\mathfrak{p}}
\def\g{\mathfrak{g}}

\def\F{{\bf F}}
\def\R{{\bf R}}
\def\C{{\bf C}}
\def\C{{\bf C}}
\def\H{{\bf H}}

\def\GL{{\bf GL}}
\def\SU{{\bf SU}}

%%%%%%%%%%%%%

\begin{document}

%%%%% To ease editing, for IMPAN journals add:

\baselineskip=17pt

%%%%%%%%%%%%%%%%

\title{A Laplace-type representation for some generalized spherical functions of type $BC$}

\author{P.\ Sawyer\\
Department of mathematics and computer science\\ 
Laurentian University\\
Sudbury, Ontario\\
Canada P3E 2C6\\
E-mail: psawyer@laurentian.ca
}

\date{}

\maketitle

%% Classification and key words; note that the 2010 classification is used:

\renewcommand{\thefootnote}{}

\footnote{2010 \emph{Mathematics Subject Classification}: Primary 33C67; Secondary 43A90, 33C80, 43A85}

\footnote{\emph{Key words and phrases}: symmetric spaces, Dunkl, spherical functions, Laplace-type, root system, Abel transform}

\renewcommand{\thefootnote}{\arabic{footnote}}
\setcounter{footnote}{0}

%%%%%%%%

\begin{abstract}
In \cite{Voit}, R\"osler and Voit give a formula for generalized spherical functions of type $BC$ in terms of the
spherical functions of type $A$.  We use this formula to describe precisely the support of the associated generalized Abel transform.
Furthermore, we derive a similar formula for the generalized spherical functions in the rational Dunkl setting.  The support of the intertwining operator $V$ is also deduced.

We also show, as a consequence, that a Laplace-type expression exists for the generalized spherical functions both in the trigonometric Dunkl setting and in the rational Dunkl setting.
\end{abstract}

\section{Introduction}

We introduce here the background for this paper. We refer the reader to Helgason's books \cite{Helgason1, Helgason2} as the standard reference on symmetric spaces.  Let $G$ be a semisimple Lie group of noncompact type with maximal 
compact subgroup $K$, let $\g=\kk\oplus\p$ be a Cartan decomposition of $G$ and let $\a$ be a maximal Abelian subspace of $\p$.
The corresponding symmetric space of noncompact type is $M=G/K$.   We also recall the definition of the Cartan motion group and the symmetric space of Euclidean type associated with $G$: it is the semi-direct product $G_0=K\rtimes \p$ where the group multiplication is defined by $(k_1,X_1)\cdot(k_2,X_2)=(k_1\,k_2,\Ad(k_1)(X_2)+X_1)$ while the associated symmetric space of Euclidean type is then $M_0=\p\simeq G_0/K$ (the action of $G_0$ on $\p$ is given by $(k,X)\cdot Y=\Ad(k)(Y)+X$).

We recall some notions concerning spherical functions, the Abel transform and its dual.  For $X\in \a$ and $\lambda\in \a_\C$, the spherical functions in the noncompact case are given by the equation
\begin{align*}
\phi_\lambda(e^X)=\int_K\,e^{(i\,\lambda-\rho)(H(e^X\,k))}\,dk
\end{align*} 
where $\rho(k)=\frac{1}{2}\,\sum_{\alpha\in R_+}\,k_\alpha\,\alpha$, $k_\alpha$ is the multiplicity of $\alpha$ and $g=k\,e^{H(g)}\,n\in K\,A\,N$ refers to the Iwasawa decomposition and by 
\begin{align*}
\psi_\lambda(X)=\int_K\,e^{i\,\lambda(\pi_\a(\Ad(k)\cdot X))}\,dk
\end{align*}
for the corresponding Cartan motion group (Euclidean type) where $\pi_\a$ is the orthogonal projection from $\p$ to $\a$ with respect to the Killing form.

When $X\not=0$, the spherical functions have a Laplace-type representation 
\begin{align}
\phi_\lambda(e^X)&=\int_{\a}\,e^{i\,\langle\lambda,H\rangle}\,K(H,X)\,dH,\label{LT}\\
\psi_\lambda(X)&=\int_{\a}\,e^{i\,\langle\lambda,H\rangle}\,K_0(H,X)\,dH\nonumber
\end{align}
with $K(H,X)\geq0$ and  $K_0(H,X)\geq0$ and where the support of $K(\cdot,X)$ and $K_0(\cdot,X)$ is $C(X)$, the convex hull of $W\cdot X$ in $\a$. Observe that $X\not=0$ ensures that $\dim C(X)=\dim\a$ by \cite[Theorem 10.1, Chap.{} IV]{Helgason2}.   

The function $K(H,\cdot)$ is the kernel of the Abel transform
\begin{align*}
\mathcal{A}(f)(e^H)=e^{\rho(H)}\,\int_N\,f(a\,n)\,dn=\int_{\a}\,f(e^X)\,K(H,X)\,\delta(X)\,dX
\end{align*}
where $\delta(X)=\prod_{\alpha\in R_+}\,\sinh^{m_\alpha}\alpha(X)$ while the dual Abel transform is simply given by
\begin{align}
\mathcal{A}^*(f)(e^X)=\int_K\,f(e^{H(e^X\,k)})\,dk=\int_{\a}\,f(e^H)\,K(H,X)\,dH\label{Dual}
\end{align}
so that $\phi_\lambda=\mathcal{A}^*(e^{i\,\langle\lambda,\cdot\rangle})$. 

We can also define the Abel transform $\mathcal{A}_0$  and its dual $\mathcal{A}_0^*$ for the symmetric space of Euclidean type in a similar fashion
(the area element $\delta(X)$ then becomes $\pi(X) =\prod_{\alpha\in R_+}\,\alpha(X)^{m_\alpha}$).

In Figure \ref{Dunkl}, we find some of the basic objects that will come into our discussion on the trigonometric Dunkl operators (also called Cherednik operators) and the rational Dunkl operators.  These operators provide the basis for the generalization of the spherical functions and the Abel transform to root systems with arbitrary multiplicities. The reader should refer to \cite{Anker,Opdam1,Roesler3,Sawyer1} for more details.
\begin{figure}[ht]
\begin{center}
\begin{tabular}{|p{6.0cm}|p{6.0cm}|}\hline
{\bf trigonometric Dunkl setting}  & {\bf rational Dunkl setting}\\ \hline
generalization of the symmetric space of noncompact type&generalization of the symmetric space of Euclidean type \\ \hline
\parbox[t]{6.0cm}{$\displaystyle D_\xi=\partial_\xi+\sum_{\alpha\in R_+}\,k_\alpha\,\alpha(\xi)\,\frac{1-r_\alpha}{1-e^{-\alpha}}$ ~\qquad${}-\rho(k)(\xi)$}
&$\displaystyle T_\xi=\partial_\xi+\sum_{\alpha\in R_+}\,k_\alpha\,\alpha(\xi)\,\frac{1-r_\alpha}{\langle \alpha,X\rangle}$\\\hline
\multicolumn{2}{|l|}{\parbox[t]{12.0cm}{$\langle\cdot,\cdot\rangle$ denotes the Killing form, $R_+$ is the set of positive roots, $\partial_\xi$ is the derivative in the direction of $\xi\in\a$, $r_\alpha(X)=X-2\,\frac{\langle \alpha,X\rangle}{\langle \alpha,\alpha\rangle}\,\alpha$, $\alpha\in R$, $X\in\a$ and $W$ is the group generated by the $r_\alpha$'s}}\\\cline{1-2}
$D_\xi\,E(\lambda,\cdot) = \langle\xi,\lambda\rangle\,E(\lambda,\cdot)$,\hfill\break $\lambda\in \a_\C^*$;  unique analytic solution with $E(\lambda,0)=1$
&$T_\xi\,E(\lambda,\cdot) = \langle\xi,\lambda\rangle\,E(\lambda,\cdot)$,\hfill\break  $\lambda\in \a_\C^*$; unique analytic solution with $E(\lambda,0)=1$\\\hline
$p(D_\xi)\,J(\lambda,\cdot) = p(\langle\xi,\lambda\rangle)\,J(\lambda,\cdot)$, $\lambda\in \a_\C^*$; unique analytic solution with $J(\lambda,0)=1$
&$p(T_\xi)\,J(\lambda,\cdot) = p(\langle\xi,\lambda\rangle)\,J(\lambda,\cdot)$, $\lambda\in \a_\C^*$; unique analytic solution with $J(\lambda,0)=1$\\ \cline{1-2}
$p(\partial_\xi)\circ\mathcal{A}=\mathcal{A}\circ p(D_\xi)$ , $p$ any symmetric polynomial
&$p(\partial_\xi)\circ\mathcal{A}=\mathcal{A}\circ p(T_\xi)$, $p$ any symmetric polynomial\\ \hline
$\mathcal{A}^*\circ p(\partial_\xi)=p(D_\xi)\circ \mathcal{A}^*$ , $p$ any symmetric polynomial 
&$\mathcal{A}^*\circ p(\partial_\xi)=p(T_\xi)\circ \mathcal{A}^*$ , $p$ any symmetric polynomial\\ \hline
\multicolumn{2}{|l|}{$J(\lambda,X)=\frac{1}{|W|}\,\sum_{s\in W}\,E(\lambda,s\cdot X)$, $X\in\a$}\\\cline{1-2}
\multicolumn{2}{|l|}{$J(w\cdot\lambda, w'\cdot X)=J(\lambda,X)$, $X\in\a$ and $w$, $w'\in W$}\\\hline
\end{tabular}
\caption{Dunkl setting and trigonometric Dunkl setting\label{Dunkl}}
\end{center}
\end{figure}

The functions $J(\lambda,\cdot)$ generalize the spherical functions on the symmetric spaces of noncompact type to arbitrary multiplicities (in the trigonometric Dunkl setting) and those on the symmetric spaces of Euclidean type (in the rational Dunkl setting). In the coming sections, we will use the notation $\phi_\lambda$ in the trigonometric setting and $\psi_\lambda$ in the rational setting.

As for the function $E(\lambda,\cdot)$ in the rational Dunkl setting, we have the following representation
\begin{align*}
E(\lambda,\cdot)&=V\,e^{\langle\lambda,\cdot\rangle}
\end{align*}
where $V$ is called the Dunkl intertwining operator.   We also introduce the positive measure $\mu_x$ such that
\begin{align*}
V_X(f)=\int_{\a}\,f(H)\,d\mu_X(H)
\end{align*}
(for the existence of the positive measure, see for example \cite{Roesler2}).  The support of $V_X$ is known to contain $W\cdot X$ and to be included in  $C(X)$ 
(refer to \cite{Anker} for example).

The intertwining operator $V$ has the property that 
$T_\xi\,V=V\,\partial_\xi$.  Compare with the intertwining properties of the generalized Abel transform and of its dual given in Figure \ref{Dunkl}.

Recently (\cite{Trimeche4}), Trim\`eche was able to introduce the intertwining operator in the trigonometric Dunkl setting.

The terms ``group case'' or ``geometric setting'' refer to the situations corresponding to the roots systems associated to symmetric spaces. 

From now on, unless stated otherwise, we are considering root systems of type $BC$. 
When referring to the root systems of type $A$ which come into play in our discussions, we will use the superscript $A$.  The integer $q\geq 1$ will be assumed to be fixed as well as $d=\dim_{\R}\,\F$ where $\F=\R$, $\C$ or $\H$.

In the geometric setting, the space $\a$ is made of the matrices
\begin{align*}
\left[
\begin{array}{ccc}
0_{q\times q}&D_X&0_{q\times (p-q)}\\
D_X&0_{q\times q}&0_{q\times (p-q)}\\
0_{(p-q)\times q}&0_{(p-q)\times q}&0_{(p-q)\times (p-q)}
\end{array}
\right]
\end{align*}
where $D_X=\diag[x_1,\dots,x_q]$ with $p\geq q$.  If $p>q$ then $\a^+=\{X\in\a\colon x_1>x_2>\dots>x_p>0\}$; if $p=q$ then $\a^+=\{X\in\a\colon x_1>x_2>\dots>|x_p|\}$.  We will assume that we are in the former case.

Note that the Killing form $\langle X,Y\rangle=C\,\tr X\,Y$ for some $C>0$ where $\tr$ denotes the trace.  This means that $\langle X,Y\rangle=2\,C\,\sum_{i=1}^q\,x_i\,y_i$.  

Figure \ref{geom} shows the multiplicities of the various roots depending on $p$, $q$ and $d$.

\begin{figure}[h]
\begin{center}
\begin{tabular}{|c|c|}\hline
$\alpha$&$m_\alpha$\\ \hline
$h_i$&$d\,(p-q)$\\ \hline
$2\,h_i$&$d-1$\\ \hline
$h_i-h_j$&$d$\\ \hline
$h_i+h_j$&$d$\\ \hline
\end{tabular}
\end{center}
\caption{The root system $BC$\label{geom} for the symmetric spaces $\SO_0(p,q)/\SO(p)\times\SO(q)$ ($d=1$), $\SU(p,q)/\SU(p)\times\SU(q)$ ($d=2$) and
$\Sp(p,q)/\Sp(p)\times\Sp(q)$ ($d=4$) with $p\geq q$}
\end{figure}

We set some terminology here which differs slightly from the usage in \cite{Voit}.

\begin{definition}\label{chamber}
We write $C_q=\{X\colon X=\diag[x_1,\dots,x_q],~x_i\in\R\}$ and
$C_q^+=\{X\in C\colon x_1>x_2>\dots>x_q>0\}$ (in \cite{Voit}, $C_q$ is used for $\overline{C_q^+}=\{X\in C_q\colon x_1\geq x_2\geq \dots\geq x_q\geq 0\}$).

The Weyl group $W$ acts on $C_q$ by permuting the $x_i$'s and by changing any numbers of its signs.  We also define $C(X)$ as the convex hull of $W\cdot X$ in $C_q$.

\end{definition}

The following result from \cite{Voit} is a generalization of a result from \cite{Sawyer2} which was given for the symmetric spaces 
$\SO_0(p,q)/\SO(p)\times\SO(q)$. 

\begin{theorem}[\cite{Voit}]\label{phiBC}
The generalized spherical function associated to the root system of type $BC$ is given as 
\begin{align}
\phi_\lambda(X)&=\int_{B_q}\,\phi^A_\lambda(Z(X,w)) \,|\det(Z(X,w))|^{-d\,(p+1)/2+1}\,dm_p(w)\,dw\label{phi}
\end{align}
where $\Re p> 2\,q-1$, $B_q=\{w\in M_q(\F)\colon I-w^*\,w>0\}$, $Z(X,w)=\cosh X+\sinh X\,w$, 
$m_p(w)=\frac{1}{\kappa^{p\,d/2}}\,\det(I-w^*\,w)^{p\,d/2-d\,(q-1/2)-1}$,
$\kappa^{p\,d/2}=\int_{B_q}\,\det(I-w^*\,w)^{p\,d/2-d\,(q-1/2)-1}\,dw$ and $\phi^A_\lambda$ is a spherical function for $\GL_0(q,\F)$ (the connected component of the $q\times q$ non-singular matrices over $\F$).
\end{theorem}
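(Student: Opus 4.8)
The plan is to establish \eqref{phi} first in the \emph{group case} --- $\F\in\{\R,\C,\H\}$, $d=\dim_\R\F$, and $p$ an integer with $p>q$, so that $\phi_\lambda$ is an honest spherical function on one of the symmetric spaces of Figure \ref{geom} --- and then to pass to arbitrary complex $p$ with $\Re p>2q-1$ by analytic continuation. For the continuation, note that the right-hand side of \eqref{phi} is holomorphic in $p$ on $\Re p>2q-1$: the factor $\det(I-w^*w)^{p\,d/2-d(q-1/2)-1}$ is integrable on $B_q$ exactly when $\Re\bigl(p\,d/2-d(q-1/2)-1\bigr)>-1$, i.e.\ $\Re p>2q-1$, which is also the condition for $\kappa^{p\,d/2}<\infty$, while $\phi^A_\lambda(Z(X,w))\,|\det Z(X,w)|^{-d(p+1)/2+1}$ stays bounded on $\overline{B_q}$ because $\|(\tanh X)\,w\|<1$ forces $\det Z(X,w)\neq0$ there. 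On the other side, $\phi_\lambda$ is the Heckman--Opdam hypergeometric function of type $BC_q$ with multiplicities $m_{h_i}=d(p-q)$, $m_{2h_i}=d-1$, $m_{h_i\pm h_j}=d$, all affine in $p$ and of nonnegative real part on the region in question, hence holomorphic in $p$ there (see \cite{Opdam1}). The difference of the two sides is thus holomorphic on $\Re p>2q-1$ and vanishes at every integer $p>q$; using Stirling-type estimates for the Gamma factors in $\kappa^{p\,d/2}$, standard bounds on $\phi^A_\lambda$, and Opdam's estimates for the hypergeometric function, one bounds this difference polynomially in $|\Im p|$ on vertical lines, and \emph{Carlson's theorem} forces it to vanish identically.

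For the group case, \eqref{phi} is the higher-rank matrix analogue of the classical integral representation of the Jacobi functions on the rank-one real, complex and quaternionic hyperbolic spaces, and the proof follows the same pattern. Realize $M=G/K$ in the matrix model recalled above and write $K=K_p\times K_q$ for the product of the relevant orthogonal, unitary or symplectic groups of sizes $p$ and $q$. Starting from $\phi_\lambda(e^X)=\int_K e^{(i\lambda-\rho)(H(e^X k))}\,dk$, and using that $e^X$ is supported on the $2q\times2q$ core with blocks $\cosh X$ and $\sinh X$, a direct Iwasawa computation shows that $H(e^X k)$ depends on $k$ only through a single matrix $w=w(k)\in B_q$ built from a $q\times q$ block of $k$, that $w(k)$ sweeps out $B_q$ as $k$ ranges over $K$, and that $H(e^X k)$ equals the $\GL_0(q,\F)$-Cartan projection of $Z(X,w)=\cosh X+\sinh X\,w$ (equivalently, $Z(X,w)^*Z(X,w)$ is $K_q$-conjugate to $e^{2H(e^X k)}$). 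The whole discrepancy between the two root systems now sits in the half-sums of positive roots: a direct computation with Figure \ref{geom} gives $\rho=\rho^A+\bigl(d(p+1)/2-1\bigr)\sum_i h_i$, where $\rho^A$ is the type-$A$ density for $\GL_0(q,\F)$ and $e^{(\sum_i h_i)(H(e^X k))}=|\det Z(X,w)|$. Hence $e^{(i\lambda-\rho)(H(e^X k))}=|\det Z(X,w)|^{-d(p+1)/2+1}\,e^{(i\lambda-\rho^A)(H(e^X k))}$, and integrating out the residual $K_q$-variable turns the last factor into $\phi^A_\lambda(Z(X,w))$; the three cases $\F=\R,\C,\H$ are handled uniformly in terms of $d$.

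It remains to identify the push-forward of the normalized Haar measure of $K$ under $k\mapsto w(k)$. By a classical matrix-integral computation, with $B_q$ appearing as the image of a Stiefel-type submanifold of $K_p\times K_q$, this push-forward equals $\kappa^{-p\,d/2}\det(I-w^*w)^{p\,d/2-d(q-1/2)-1}\,dw=dm_p(w)\,dw$, the constant $\kappa^{p\,d/2}$ being exactly what makes it a probability measure; for integer $p>q$ the exponent $p\,d/2-d(q-1/2)-1$ is automatically $>-1$. Combining the last two steps yields \eqref{phi} for integer $p>q$ --- the value $X=0$ being a convenient check, both sides equalling $1$ since $Z(0,w)=I$ and $m_p$ has total mass $1$ --- and the analytic continuation then gives the general statement. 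The real obstacle is the middle step: performing the Iwasawa computation so as to identify $H(e^X k)$ with the $\GL_0(q,\F)$-Cartan projection of $\cosh X+\sinh X\,w$ and to track the determinant twist, uniformly for $\F=\R,\C,\H$. Once that is in place, the measure computation, the holomorphy in $p$, and Carlson's theorem are comparatively routine.
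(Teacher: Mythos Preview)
The paper does not prove this theorem: it is quoted from R\"osler--Voit \cite{Voit}, followed only by the remark that the formulation has been recast in terms of the type-$A$ spherical function $\phi^A_\lambda$ using the explicit values of $\rho^{BC}$ and $\rho^A$. There is thus no ``paper's own proof'' to compare against. What you have sketched is, in outline, the R\"osler--Voit argument itself---the group case followed by analytic continuation in the parameter $p$---and the determinant twist you extract from $\rho^{BC}-\rho^A=\bigl(\tfrac{d(p+1)}{2}-1\bigr)\sum_i f_i$ is precisely the adaptation the author is alluding to.

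One step in your sketch is misstated, however. You write that $H(e^X k)$ equals the $\GL_0(q,\F)$-\emph{Cartan} projection of $Z(X,w)$, equivalently that $Z(X,w)^*Z(X,w)$ is $K_q$-conjugate to $e^{2H(e^X k)}$. That is not correct, and it contradicts your own next sentence about a ``residual $K_q$-variable'' still to be integrated out: if $H(e^X k)$ depended on $k$ only through $w$, there would be nothing residual. The actual identity (visible in the proof of Proposition~\ref{XC} here, and in \cite{Sawyer2} for $\F=\R$) is
\[
H(e^X k)=H^A\bigl(Z(X,w')\,U\bigr),\qquad w'=w(k)\,U^{*},\quad U\in \U(q,\F),
\]
the type-$A$ \emph{Iwasawa} projection, which genuinely depends on the extra unitary variable $U$. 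It is the integration of $e^{(i\lambda-\rho^A)(H^A(Z(X,w')U))}$ over $U$ that produces $\phi^A_\lambda(Z(X,w'))$; the determinant factor survives because $\sum_i f_i(H^A(g))=\log|\det g|$ is already $U$-independent. With this correction your outline matches \cite{Voit}: the push-forward of Haar measure to $dm_p$, holomorphy of both sides in $p$ on $\Re p>2q-1$, and a Carlson-type uniqueness argument are indeed the remaining ingredients there.
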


This formulation is adapted from \cite{Voit} to use the formula for the spherical functions of type $A$ (refer to \cite{Sawyer3} for example) and the formulas
$\rho^{BC}=\sum_{i=1}^q\,(\frac{d}{2}\,(p+q+2-2\,i)-1)\,f_i$ and $\rho^{A}=\sum_{i=1}^q\,\frac{d}{2}\,(q+1-2\,i)\,f_i$ where $f_i(X)=x_i$.

This allows us to provide the following expression for the dual of the Abel transform:
\begin{align}\label{Abel}
\mathcal{A}^*(f)(X)&=\int_{B_q}\,(\mathcal{A}^A)^*(f)(Z(X,w))
\,|\det(Z(X,w))|^{-d\,(p+1)/2+1}\,dm_p(w)\,dw.
\end{align}

Some additional notation which will be used throughout the paper:
\begin{definition}\label{XZ}
In addition to the notation $Z(X,w)=\cosh X+\sinh X\,w$, we will also denote $Z_2(X,w)=Z(X,w)^*\,Z(X,w)$ and $X(w)=a^A(Z(X,w))$ (the logarithms of the singular values of $Z(X,w)$ which equal half the logarithms of the eigenvalues of $Z_2(X,w)$ in decreasing order).
\end{definition}

This paper is organized as follows. We will start by assembling some preliminary result in Section \ref{prelim}.
In Theorem \ref{main} of Section \ref{Dually}, we will show that the support of the measure $f\mapsto \mathcal{A}^*(f)(X)$ is $C(X)$ as in the geometric setting.  
We also show in Theorem \ref{KHX} that the dual of the Abel transform  has a kernel $H\mapsto K(H,X)$ provided $X\not=0$.  In Section \ref{DunklSetting}, we derive 
an expression similar to the one given in \eqref{phi} for the generalized spherical functions associated to the root system $BC$ in the rational Dunkl setting.  This allows us to adapt the results of Theorem \ref{main} and Theorem \ref{KHX} to that setting.  We conclude by showing that the support of the intertwining operator $f\mapsto V_X(f)$ is also $C(X)$.

In \cite{Trimeche3},  Trim\`eche proved the more general result for all root systems that the measures $\mu_X$ (and therefore the corresponding dual of the Abel transform) are absolutely continuous with respect to the measure on $\a$ provided that $X$ is regular.  His results apply both in the rational and in the trigonometric settings.  However, our results for the root system $BC$ provide a more direct construction of the density $K$ (for the dual of the Abel transform), its exact support and are valid whenever $X\not=0$, a natural extension of the geometric setting.
The reader should also also consider the papers \cite{Trimeche1, Trimeche2} by  Trim\`eche on the root system $BC_d$ and $BC_2$.

\section{Preliminaries}\label{prelim}

In this section, we will introduce a few technical lemmas which will allow us to better describe the support of the measure $f\mapsto\mathcal{A}^*(f)(X)$.

The following results will provide a description of the convex hull $C(X)$ of an element $X$ of the Cartan subspace $\a$ based on the geometric setting described in the Introduction.  These results are easily transposed in terms of the sets $C_q$ and $C_q^+$ given in Definition \ref{chamber}.

\begin{lemma}\label{C+}
Let ${}^+C=\{H\in\a\colon \langle X,H\rangle>0~\hbox{for all}~ X\in\a^+\}$.  Then
\begin{align*}
{}^+C&=\{H\in\a\colon h_1>0,h_1+h_2>0,\dots, h_1+\dots+h_q>0\}.
\end{align*}
\end{lemma}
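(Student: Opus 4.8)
The plan is to reduce both inclusions to elementary inequalities in the coordinates $h_1,\dots,h_q$ of $H$, using the explicit description of the chamber. Since $\langle X,H\rangle=2\,C\sum_{i=1}^q x_i h_i$ with $C>0$ fixed, the sign conditions defining ${}^+C$ are unchanged if we replace the Killing form by the standard inner product on $\R^q$, so I work with the latter. For $1\le k\le q$ let $v_k\in\a$ be the element with coordinates $x_1=\dots=x_k=1$ and $x_{k+1}=\dots=x_q=0$. Two facts are needed: first, $\overline{\a^+}=\{x_1\ge x_2\ge\dots\ge x_q\ge 0\}$ is exactly the cone $\{\sum_{k=1}^q c_k v_k\colon c_k\ge 0\}$, with the open chamber $\a^+$ corresponding to all $c_k>0$ (concretely $c_k=x_k-x_{k+1}$, with the convention $x_{q+1}:=0$); second, the Abel-summation identity
\[
\langle X,H\rangle=2\,C\sum_{i=1}^q x_i h_i=2\,C\sum_{k=1}^q(x_k-x_{k+1})(h_1+\dots+h_k),
\]
together with $\langle v_k,H\rangle=2\,C\,(h_1+\dots+h_k)$.

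I would then prove the two inclusions separately. Suppose first that $h_1+\dots+h_k>0$ for every $k$, and take any $X\in\a^+$: in the identity above each coefficient $x_k-x_{k+1}$ is strictly positive (because $X\in\a^+$) and multiplies the strictly positive number $h_1+\dots+h_k$, so $\langle X,H\rangle>0$, i.e.\ $H\in{}^+C$. Conversely, let $H\in{}^+C$; each $v_k$ is a nonzero element of $\overline{\a^+}$ spanning an extreme ray of that cone, so the positivity condition forces $\langle v_k,H\rangle>0$, that is $h_1+\dots+h_k>0$, for every $k$. This yields the stated description of ${}^+C$, and the reformulation in terms of $C_q$ and $C_q^+$ is then immediate from the identifications $\a\simeq C_q$, $\a^+\simeq C_q^+$ of Definition~\ref{chamber}.

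The point I would treat most carefully is the strictness of the inequalities in the second inclusion. Testing $\langle X,H\rangle$ only at interior points $X\in\a^+$ and passing to the limit gives no more than $h_1+\dots+h_k\ge 0$ — a linear functional may be strictly positive throughout the open chamber yet vanish along a boundary ray of $\overline{\a^+}$ — so one really has to evaluate the positivity condition at the extreme-ray generators $v_k$ of the \emph{closed} chamber (equivalently, understand ${}^+C$ as the set of $H$ with $\langle X,H\rangle>0$ for every nonzero $X\in\overline{\a^+}$). Granting that, the remainder is just the triangular change of coordinates $X\leftrightarrow(x_k-x_{k+1})_k$ and presents no difficulty, so this extreme-ray step is where the real content of the lemma sits.
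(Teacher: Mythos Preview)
Your argument is essentially the paper's: both use the substitution $y_k=x_k-x_{k+1}$ (your $c_k$) to rewrite $\sum_i x_ih_i$ as $\sum_k y_k(h_1+\cdots+h_k)$, with the $y_k$ ranging independently over $(0,\infty)$ as $X$ ranges over $\a^+$, and then read off the conditions on $H$. The paper's proof is exactly this change of variables followed by the bare assertion that the positivity holds for all $y_k>0$ if and only if every $h_1+\cdots+h_k>0$.

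Your added scrutiny of the ``only if'' direction is well placed and in fact goes beyond what the paper does. As you observe, testing only against $X\in\a^+$ and passing to the boundary yields at best $h_1+\cdots+h_k\ge 0$; concretely, with $q=2$ and $H=(1,-1)$ one has $h_1+h_2=0$ yet $\langle X,H\rangle\propto x_1-x_2>0$ for every $X\in\a^+$, so the literal statement (with the \emph{open} chamber in the definition of ${}^+C$) is slightly too strong and the paper's proof does not address the point. Your proposed resolution---interpreting ${}^+C$ as $\{H:\langle X,H\rangle>0$ for every nonzero $X\in\overline{\a^+}\}$, so that one may legitimately evaluate at the extreme-ray generators $v_k$---does give the strict conclusion and is the clean formulation. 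The discrepancy is harmless for the application in Proposition~\ref{CBq}, where one is ultimately describing the closed hull $C(X)$, but your identification of this step as the real content of the lemma is exactly right.
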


\begin{proof}
Any $X\in\a^+$ can be written with $x_q=y_q$, $x_{p-1}=y_q+y_{q-1}$, \dots, $x_1=y_1+\dots+y_q$ where the $y_i$'s are strictly positive and arbitrary.  The inequality $\langle X,H\rangle>0$ is equivalent to 
\begin{align*}
0&<h_1\,x_1+\dots+h_q\,x_q\\
&=(h_1+\dots+h_q)\,y_q + (h_1+\dots+h_{q-1})\,y_{q-1}+\dots+h_1\,y_1
\end{align*}
which holds for all $X\in\a^+$ if and only if $h_1+\dots+h_{k}>0$ for $k=1$, \dots, $q$.
\end{proof}

We are now in a position to describe the set $C(X)$ in terms of inequalities.

\begin{proposition}\label{CBq}
Let $X\in\overline{\a^+}$.  Then $H\in C(X)$ if and only if 
\begin{align}
\sum_{k=1}^r\,|h_{i_k}|\leq \sum_{k=1}^r\,x_k~
\hbox{for any choice of distinct $i_1$, \dots, $i_r$, $1\leq r\leq q$}.
\label{CX}
\end{align}

Furthermore, $H\in C(X)^\circ$ if and only if all the inequalities in \eqref{CX} are strict.
\end{proposition}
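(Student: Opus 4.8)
The plan is to recognize \eqref{CX} as the classical majorization description of the convex hull of the orbit of $X$ under the hyperoctahedral group $W$ (signed permutations), and to reduce it to two standard facts: first, Rado's theorem that for the symmetric group the convex hull of the orbit of a vector $(x_1,\dots,x_q)$ consists exactly of the vectors majorized by it; and second, that adjoining the sign changes amounts to passing to absolute values. Concretely, I would first observe that $C(X)$, being convex and $W$-invariant, is determined by its support function $h \mapsto \max_{Y \in C(X)} \langle Y, H \rangle = \max_{w \in W} \langle w\cdot X, H\rangle$. Since $W$ permutes coordinates and flips signs, and $X \in \overline{\a^+}$ has $x_1 \ge \dots \ge x_q \ge 0$, this maximum is $\sum_{k=1}^q x_k\, |h|^{\downarrow}_k$, where $|h|^{\downarrow}$ is the vector of $|h_i|$ rearranged in decreasing order — one picks the signed permutation that pairs the largest $x_k$ with the largest $|h_i|$.

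Next I would turn the support-function description into the inequalities \eqref{CX}. By duality, $H \in C(X)$ iff $\langle Y, H\rangle \le \max_{w}\langle w\cdot X, w'\cdot H\rangle$ fails for no witness — more precisely, $H \in C(X)$ iff for every linear functional $\xi$ we have $\langle \xi, H \rangle \le \sigma_{C(X)}(\xi)$. It suffices to test $\xi$ ranging over $W\cdot X$ (by bipolarity, since $C(X)$ is the convex hull of the finite set $W\cdot X$ one only needs finitely many inequalities, but cleaner is: $H \in C(X)$ iff $\langle w \cdot X, H\rangle \le \sigma_{C(X)}(w\cdot X)$ for all $w$; and by $W$-invariance both sides reduce to the case of a fixed dominant representative). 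Taking $\xi$ of the form "select $r$ coordinates and flip some signs" gives exactly $\sum_{k=1}^r |h_{i_k}| \le \sum_{k=1}^r x_k$ for all choices of distinct $i_1,\dots,i_r$ and all $1 \le r \le q$; conversely these inequalities, being the extreme cases, imply $\langle \xi, H\rangle \le \sigma_{C(X)}(\xi)$ for all $\xi$. Alternatively, and perhaps more transparently, I would cite or reprove the finite-dimensional statement: for $H$ with $|h|^{\downarrow} = (a_1,\dots,a_q)$ and $X$ with $x_1\ge\dots\ge x_q\ge 0$, one has $H \in C(X)$ iff $\sum_{k=1}^r a_k \le \sum_{k=1}^r x_k$ for $r=1,\dots,q$ — this is the signed-permutation analogue of Rado's theorem, and \eqref{CX} is just the unsorted restatement (taking the $r$ largest $|h_{i_k}|$ gives the binding constraint, and any other choice of $r$ indices gives a weaker inequality, so "for all choices" is equivalent to "for the top $r$").

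For the interior statement, I would argue that $C(X)^\circ$ in $\a$ (which is nonempty exactly when $X \ne 0$, equivalently $x_1 > 0$, as noted via \cite[Theorem 10.1, Chap.\ IV]{Helgason2}) is cut out by the strict versions of the defining inequalities: a point is interior iff it satisfies all the closed-polytope facet inequalities strictly. One must check that \eqref{CX} is an irredundant-enough description that no facet is missed — i.e., that the relative interior is genuinely $\{$all inequalities strict$\}$ and not a proper subset — which follows because each inequality in \eqref{CX} is attained (with equality) at some vertex $w \cdot X$, so each defines a genuine supporting hyperplane, and conversely every supporting hyperplane of $C(X)$ is of the form $\langle w\cdot X,\cdot\rangle = $ const, handled by the cases above. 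I expect the main obstacle to be bookkeeping: carefully matching the "for any choice of distinct $i_1,\dots,i_r$" quantifier in \eqref{CX} against the sorted majorization condition, and making sure the sign-change part of $W$ is correctly incorporated (so that it is $|h_{i_k}|$ and not $h_{i_k}$ that appears), plus verifying nonredundancy of the facet description for the interior claim; the convex-geometry input itself (Rado-type majorization) is standard and can be quoted.
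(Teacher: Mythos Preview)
Your approach is correct but takes a different route from the paper. The paper's proof invokes Helgason's result \cite[Lemma 8.3, p.~459]{Helgason2}, which states $C(X)\cap\a^+=(X-{}^+C)\cap\a^+$, combines it with the explicit description of the dual cone ${}^+C$ from Lemma~\ref{C+}, and then uses Weyl invariance to pass from the chamber $\a^+$ to all of $\a$. Your argument instead works directly in convex geometry via the support function of the orbitope and the signed-permutation analogue of Rado's majorization theorem (which the paper also cites, in Remark~\ref{CXA}, but only for the type-$A$ picture), bypassing the Lie-theoretic lemma entirely. The paper's route is shorter in context since it reuses Lemma~\ref{C+} and a reference already in play; yours is more self-contained and makes transparent why absolute values appear in \eqref{CX}. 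One small simplification to your sketch: for the interior claim no irredundancy check is actually needed---for any full-dimensional polytope described by any finite system of non-trivial linear inequalities, the interior is exactly the locus where all of them hold strictly, redundant or not; so the ``bookkeeping'' worry you flag at the end is unnecessary.
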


\begin{proof}
According to \cite[Lemma 8.3 page 459]{Helgason2}, $C(X)\cap \a^+=(X- {}^+C)\cap \a^+$. Now, from Lemma \ref{C+},
 if $X\in\overline{\a^+}$, 
\begin{align*}
X-{}^+C&=\{H\in\a\colon h_1< x_1,h_1+h_2< x_1+x_2,\dots, h_1+\dots+h_q<x_1+\dots+x_q\}
\end{align*}
and therefore
\begin{align}
(X-{}^+C)\cap\a^+&=\{H\in\a\colon h_1>h_2>\dots>h_q>0,~ h_1< x_1,h_1+h_2<x_1+x_2,\dots,
\nonumber\\ &\qquad\qquad\qquad
h_1+\dots+h_q<x_1+\dots+x_q\}.\label{CX2}
\end{align}
Let $D(X)$ be the set described in \eqref{CX}.  Since $D(X)$ is Weyl invariant and $D(X)\cap\a^+$ is equal to the 
set in \eqref{CX2}, the result follows.
\end{proof}

We recall a few results related to the root systems of type $A$ in order to exploit the relationship between $\phi_\lambda$ and $\phi_\lambda^A$ in \eqref{phi}.

Recall that $\a^A$ is the space of the  $q\times q$ real diagonal matrices,
that $(\a^A)^+$ is the set of the  $q\times q$ real diagonal matrices with strictly decreasing diagonal entries
and that the Cartan decomposition of $g\in \GL_0(q,\F)$ 
is given by $g=k_1\,e^{a^A(g)}\,k_2$ with  $k_i\in \U(q,\F)$ and 
$a^A(g)\in\overline{(\a^A)^+}=\{H=\diag[h_1,\dots,h_q]\in \a^A\colon h_i\geq h_{i+1}\}$.

\begin{remark}[\cite{Rado}]\label{CXA}
If $X=\diag[x_1,\dots,x_q]\in\overline{(a^A)^+}$ then $H=\diag[h_1,\dots,h_q]\in\a^A$ belongs to $C^A(X)$ if and only if $\sum_{i=1}^q\,h_i=\sum_{i=1}^q\,x_i$ and
\begin{align}
h_{i_1}+\dots+h_{i_k}&\leq x_1+\dots+x_k\label{ineq}
\end{align}
for every choice of distinct indices $i_1$, \dots, $i_k\in\{1,\dots,n\}$,   $1\leq k\leq n-1$.
Moreover, $H\in C^A(X)^\circ$ (in the relative topology of the set $\{H\colon \tr H=\tr X\}$) if and only if all the inequalities in \eqref{ineq} are strict.
\end{remark}

The next result is helpful in giving a ``smooth criteria'' to determine the number of distinct eigenvalues of a matrix.

\begin{theorem}[Hermite]\label{companion}
Let $P$ be a polynomial with real coefficients: the number of distinct
roots of $P$ is equal to the rank of the matrix
\begin{align*}
B= \left(\begin{array}{cccc}p_0&p_1&\ldots&p_{n-1}\\
p_1&p_2&\ldots&p_{n}\\
\vdots&\vdots&\ddots&\vdots\\
\vdots&\vdots&\ddots&\vdots\\
p_{n-1}&p_n&\ldots&p_{2n-2}\\
\end{array}\right)
\end{align*}
where $p_k=\sum b_j^k$ where $b_1$,\dots, $b_n$ are the roots of $P$.
The Newton polynomials $p_k$ are polynomials of the coefficients
of $P$.
\end{theorem}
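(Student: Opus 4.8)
The plan is to exhibit $B$ as $V\,D\,V^{\top}$ for a rectangular Vandermonde matrix $V$ and an invertible diagonal matrix $D$, and then to compute the rank from this factorization.

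First I would set $n=\deg P$, list the roots of $P$ in $\C$ with multiplicity as $b_1,\dots,b_n$, and let $c_1,\dots,c_m$ be the \emph{distinct} values occurring among them, with $c_i$ appearing with multiplicity $\mu_i\ge 1$; thus $\sum_{i=1}^m\mu_i=n$ and $m$ is precisely the number of distinct roots. Collecting equal roots in each power sum gives $p_k=\sum_{j=1}^n b_j^k=\sum_{i=1}^m\mu_i\,c_i^{\,k}$ for all $k\ge 0$. (The remark that each $p_k$ is a polynomial in the coefficients of $P$ is just Newton's identities, which express $p_k$ recursively through the elementary symmetric functions of the $b_j$, i.e.\ through the coefficients of $P$.)

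Next I would introduce $V\in M_{n\times m}(\C)$ with entries $V_{j,i}=c_i^{\,j}$ for $0\le j\le n-1$, $1\le i\le m$, and $D=\diag[\mu_1,\dots,\mu_m]$. Then $(V\,D\,V^{\top})_{j,l}=\sum_{i=1}^m c_i^{\,j}\,\mu_i\,c_i^{\,l}=p_{j+l}$, so $B=V\,D\,V^{\top}$. Because the $c_i$ are pairwise distinct, the $m\times m$ block $V_0$ of $V$ formed by the rows $j=0,\dots,m-1$ is a square Vandermonde matrix with $\det V_0=\prod_{i<i'}(c_{i'}-c_i)\ne 0$; hence $\operatorname{rank}V=m$, and $D$ is invertible since every $\mu_i\ge 1$.

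Finally I would read off the rank. From $B=V\,(D\,V^{\top})$ and $\operatorname{rank}V=m$ we get $\operatorname{rank}B\le m$. Conversely, the principal $m\times m$ submatrix of $B$ on the index set $\{0,\dots,m-1\}$ equals $V_0\,D\,V_0^{\top}$, whose determinant is $(\det V_0)^2\prod_{i=1}^m\mu_i\ne 0$, so $\operatorname{rank}B\ge m$; therefore $\operatorname{rank}B=m$. A last point: although the $c_i$ may be complex, $B$ is a real matrix (each $p_k$ is real, being a symmetric function of the roots of a polynomial with real coefficients), and the rank of a matrix is unchanged under field extension, so $m$ is equally the rank of $B$ computed over $\R$. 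I do not expect a genuine obstacle here; the one place needing a little care is that the rank of the triple product $V\,D\,V^{\top}$ with $V$ rectangular cannot be obtained by naive multiplicativity of rank, which is why I pin it down through the explicit nonvanishing $m\times m$ minor $\det(V_0\,D\,V_0^{\top})$.
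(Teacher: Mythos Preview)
Your argument is correct: the factorization $B=V\,D\,V^{\top}$ with $V_{j,i}=c_i^{\,j}$ and $D=\diag[\mu_1,\dots,\mu_m]$ gives $\operatorname{rank}B\le m$, and the explicit nonvanishing principal $m\times m$ minor $\det(V_0\,D\,V_0^{\top})=(\det V_0)^2\prod_i\mu_i$ gives $\operatorname{rank}B\ge m$. The remark about the rank being the same over $\R$ and $\C$ is a clean way to handle the possibility of complex roots.

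As for comparison with the paper: the paper does not actually supply a proof of this classical result but simply refers the reader to Gantmacher's \emph{The theory of matrices}. Your self-contained Vandermonde argument is therefore more than the paper provides, and it is the standard approach one finds in the literature for this Hankel-of-power-sums matrix.
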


\begin{proof}
Refer to \cite[Page 202]{Gantmacher}.
\end{proof}

The next result will be used repeatedly.

\begin{corollary}\label{eigen}
Let $f\colon B_q\to \hbox{Symm}(q,\F)$ (the space of $q\times q$ Hermitian matrices over $\F$) be analytic and, for $1\leq r\leq q$, let 
\begin{align*}
U_r=\{w\in B_q\colon \hbox{$f(w)$ has at least $r$ distinct eigenvalues}\}.
\end{align*}
Then $U_r$ is an open set of $B_q$ and either $U_r=\emptyset$ or $U_r$ is dense in $B_q$.
\end{corollary}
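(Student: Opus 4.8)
The plan is to prove the two claims — openness and the dichotomy — separately, using Theorem \ref{companion} (Hermite's criterion) to reduce everything to the rank of an analytic matrix-valued function.

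First I would establish openness. For $w\in B_q$, write the characteristic polynomial $P_w(t)=\det(tI-f(w))$; its coefficients are analytic functions of $w$ since $f$ is analytic and the coefficients are polynomials in the entries of $f(w)$. By Theorem \ref{companion}, the number of distinct eigenvalues of $f(w)$ equals the rank of the Bezoutian-type matrix $B(w)$ whose entries are the Newton power sums $p_k(w)$ of the roots of $P_w$; by the last sentence of Theorem \ref{companion} these $p_k$ are polynomials in the coefficients of $P_w$, hence analytic in $w$. So $w\in U_r$ iff $\operatorname{rank} B(w)\geq r$, i.e. iff some $r\times r$ minor of $B(w)$ is nonzero. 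Each such minor is an analytic function of $w$, and $U_r$ is the union over all $r\times r$ submatrices of the sets where the corresponding minor is nonzero; a finite union of preimages of the open set $\C\setminus\{0\}$ (or $\R\setminus\{0\}$) under continuous maps is open. Hence $U_r$ is open.

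Next I would prove the dichotomy. Suppose $U_r\neq\emptyset$; I must show $U_r$ is dense in $B_q$. Fix any $r\times r$ submatrix of $B(w)$ whose determinant, call it $m(w)$, is not identically zero on $B_q$ — such a submatrix exists precisely because $U_r\neq\emptyset$. The key point is that $B_q=\{w\in M_q(\F)\colon I-w^*w>0\}$ is a connected open subset of the real vector space $M_q(\F)$ (it is convex, being the sublevel set of the convex function $w\mapsto \|w\|_{\mathrm{op}}$, or at worst manifestly connected), and $m$ is a real-analytic (indeed, for $\F=\C$ one works with $w,\bar w$ as independent real-analytic coordinates) function on $B_q$ that is not identically zero. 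A nonzero real-analytic function on a connected open set has a zero set with empty interior; therefore $\{w\colon m(w)\neq 0\}$ is open and dense in $B_q$. Since $\{w\colon m(w)\neq 0\}\subseteq U_r$, the set $U_r$ is dense.

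The main obstacle, and the step needing the most care, is the density argument: one must be sure that the minors $m(w)$ genuinely are real-analytic on the connected set $B_q$ so that the identity theorem for real-analytic functions applies. The subtlety is that for $\F=\C$ or $\H$ the map $f$ is "analytic" only in the real-analytic sense (its output lands in the Hermitian matrices, which is not a complex subspace), so one cannot invoke the complex identity theorem; instead one uses the real-analytic identity theorem on the connected real-analytic manifold $B_q$. Once this is in place — together with the observation that connectedness of $B_q$ is exactly what forbids a nontrivial clopen decomposition into a part where $\operatorname{rank} B\geq r$ and a part where it is $<r$ — the dichotomy is immediate, and the whole argument is short.
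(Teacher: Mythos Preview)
Your proof is correct and follows essentially the same route as the paper: reduce via Hermite's criterion (Theorem \ref{companion}) to the rank of an analytic matrix $B(w)$, and then use that the zero set of a nonzero real-analytic function on the connected open set $B_q$ has empty interior. The only cosmetic difference is that the paper packages all $r\times r$ minors into a single real-analytic function $F(w)=\sum|\text{minor}|^2$ and writes $U_r=F^{-1}((0,\infty))$, whereas you pick one nonvanishing minor; your explicit discussion of real-analyticity and connectedness of $B_q$ makes transparent what the paper leaves implicit.
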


\begin{proof}
Let $B_w$ be the matrix associated to the polynomial $P=\det(t\,I_q-f(w))$ by the theorem.  Let $F\colon B_q\to\R$ be such that $F(w)$ is 
the sum of the squares of the absolute values of all the $r\times r$ sub-determinants of $B_w$ and note that $F$ is analytic.  
From the theorem, $U_r=F^{-1}((0,\infty))$ which is open; either $F$ is identically equal to 0 and $U_r=\emptyset$ or $U_r$ is dense in $B_q$.
\end{proof}

\section{The dual of the Abel transform}\label{Dually}

We will describe precisely the support of the dual of the Abel transform in Subsection \ref{Support} and then show in Subsection \ref{LaplaceSec} that, provided $X\not=0$, the measure $f\mapsto \mathcal{A}^*(f)(X)$ has a density.

\subsection{The support of the dual of the Abel transform}\label{Support}

We start by recalling the invariance properties of the Abel transform.

\begin{remark}\label{WINV}
We can observe directly from \eqref{Abel} that $\mathcal{A}^*(f)(s\cdot X)=\mathcal{A}^*(f)(X)$ for every $s\in W$.  Indeed, we have $s=\Ad(k)\circ L(u)$ where $k\in \U(q,\F)$ and $\Ad(k)$ acts on the elements of $C_q$ by permuting the diagonal elements and $\Ad(k)(H)=k\,H\,k^*$ and $u=\diag[u_1,\dots, u_q]$, $u_i\in\{-1,1\}$ with $L(u)(H)=u\,H$.  It then suffices to observe that $Z(s\cdot X,w)=\Ad(k)(Z(X,L(u)\,\Ad(k^*)\,w))$, that $dw$ is invariant under the action of the map $L(u)\,\Ad(k^*)$  and that $\Ad(k)\in W^A$, the Weyl group for the root system $A_{q-1}$.

Note also that $\mathcal{A}^*(f\circ s)=\mathcal{A}^*(f)$ for every $s\in W$ (a direct consequence of \eqref{LT} and \eqref{Dual} and of the last two lines in the table of Figure \ref{Dunkl}).  This implies that the support of the measure $f\mapsto  \mathcal{A}^*(f)(X)$ is Weyl-invariant.
\end{remark}

The next result provides a decomposition of $C(X)$.

\begin{proposition}\label{XC}
Let $X\in\a$ and $q\geq1$.  Then $C(X)=\cup_{w\in\overline{B_q}}\,C^A(X(w))$.
\end{proposition}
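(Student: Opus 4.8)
The plan is to prove the two inclusions separately, using the description of $C(X)$ from Proposition \ref{CBq}, the description of $C^A(Z)$ from Remark \ref{CXA}, and the identity $X(w) = a^A(Z(X,w))$ from Definition \ref{XZ}. Throughout I may assume $X \in \overline{\a^+}$ by Weyl invariance (Remark \ref{WINV} together with the fact that both sides are built from $W$-orbits), so that the inequalities \eqref{CX} characterize $C(X)$.

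For the inclusion $\cup_{w\in\overline{B_q}}\,C^A(X(w)) \subseteq C(X)$: fix $w\in\overline{B_q}$ and let $H\in C^A(X(w))$. By Remark \ref{CXA}, $H$ satisfies the majorization inequalities $h_{i_1}+\dots+h_{i_r}\leq x_1(w)+\dots+x_r(w)$ for all distinct $i_1,\dots,i_r$ and $r<q$, plus the trace equality at $r=q$; here the $x_j(w)$ are the logarithms of the singular values of $Z(X,w)$, i.e. the diagonal entries of $a^A(Z(X,w))$. To land in $C(X)$ via Proposition \ref{CBq}, I need $\sum_{k=1}^r |h_{j_k}| \leq \sum_{k=1}^r x_k$ for every $r\leq q$. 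The key estimate is therefore $\sum_{k=1}^r x_k(w) \leq \sum_{k=1}^r x_k$ for all $r$ — that is, the partial sums of the log-singular-values of $Z(X,w)=\cosh X + \sinh X\,w$ are dominated by the partial sums of the log-singular-values of $Z(X,0)=\cosh X$ (whose logarithms of singular values are... not quite $x_k$, so a small computation is needed: actually one checks that the singular values of $\cosh X + \sinh X\, w$ for $\|w\|\leq 1$ are bounded above by $e^{x_k}$, giving $x_k(w)\leq x_k$ after sorting, and one uses that absolute values only help via sign changes being in $W$). Combined with the majorization for $H$ inside $C^A(X(w))$ and the elementary fact that $\sum|h_{j_k}|$ over the $r$ largest-in-absolute-value coordinates is what must be controlled, this yields the inequalities \eqref{CX}. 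I expect this domination of partial sums of log-singular-values to be the main obstacle, and I would prove it using a Fischer/Ky Fan-type variational characterization of $\prod_{k\le r}(\text{singular values})$ as a maximum over $r$-dimensional subspaces, applied to $Z_2(X,w) = Z(X,w)^*Z(X,w)$, noting $Z_2(X,0)$ has eigenvalues $e^{\pm 2x_i}$-ish and that $\|w\|\le 1$ contracts.

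For the reverse inclusion $C(X) \subseteq \cup_{w\in\overline{B_q}}\,C^A(X(w))$: the idea is that as $w$ ranges over $\overline{B_q}$, the matrices $X(w) = a^A(Z(X,w))$ sweep out enough of $\overline{\a^+}$ that the union of the "slices" $C^A(X(w))$ (each living in an affine hyperplane $\{\tr H = \tr X(w)\}$, but of varying trace) covers the full-dimensional set $C(X)$. Concretely, given $H\in C(X)$, I would first reduce to $H\in\overline{\a^+}$ (both sides are $W$-invariant, since each $C^A(X(w))$ is $W^A$-invariant and the $W$-sign-changes can be absorbed into replacing $w$ by $L(u)\,w$, cf.\ Remark \ref{WINV}), so $0\le h_q\le\dots\le h_1$ with $\sum_{k\le r} h_k \le \sum_{k\le r} x_k$ for all $r$. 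Then I want to find $w\in\overline{B_q}$ with $X(w)$ majorizing $H$ in the $A$-sense, i.e.\ $\sum_{k\le r} x_k(w) \ge \sum_{k\le r} h_k$ for $r<q$ and $\tr X(w) = \tr H$. The natural construction: take $w = \diag[w_1,\dots,w_q]$ diagonal with $w_i\in[-1,1]$; then $Z(X,w)$ is diagonal with entries $\cosh x_i + w_i \sinh x_i$, so $x_i(w) = \log|\cosh x_i + w_i\sinh x_i|$ after sorting, and as $w_i$ runs over $[-1,1]$ this entry runs continuously over $[-x_i, x_i]$ (taking value $0$ at $w_i=-\tanh x_i$, roughly). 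So the achievable vectors $(x_1(w),\dots)$ with diagonal $w$ fill, after sorting, a set large enough that an intermediate-value / continuity argument produces one majorizing $H$ with the correct trace; then $H\in C^A(X(w))$ by Remark \ref{CXA}. The delicate point here is matching the trace exactly while keeping all partial sums on the correct side, and I would handle it by a one-parameter deformation: start with $w$ giving $X(w)=X$ (namely $w=0$... but $Z(X,0)=\cosh X$ has log-singular-values $\log\cosh x_i \ne x_i$; better to start from a $w$ achieving $X(w)$ close to $X$) and decrease a parameter continuously until the trace hits $\tr H$, invoking continuity of eigenvalues and the fact that $C^A$ varies upper-semicontinuously. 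Alternatively, and perhaps more cleanly, prove $C^A(X(w))\subseteq C(X)$ for all $w$ (the first inclusion) and separately show $X(w)$ itself can be made to equal any point of $\overline{\a^+}$ on the boundary pieces, i.e.\ that $\{a^A(Z(X,w)) : w\in\overline{B_q}\}$ already contains a "spanning" family; then every extreme-type point of $C(X)$ lies in some $C^A(X(w))$ and convexity of each slice finishes it. I expect the forward inclusion (domination of partial sums) to be genuinely the crux; the reverse inclusion is then a continuity/surjectivity argument that should go through without surprises.
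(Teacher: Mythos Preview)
Your approach is genuinely different from the paper's, and while the strategy is not unreasonable, it has real gaps.

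The paper's proof rests on a single observation you did not make: the statement involves only $X\in\a$, the ball $\overline{B_q}$, the type-$A$ Cartan projection $a^A$, and the convex hulls $C(\cdot)$, $C^A(\cdot)$ --- none of which depend on the parameter $p$. Hence it suffices to check the identity for one convenient $p$, and the paper takes $p\ge 2q$ an integer, which places us in the geometric setting for $\SO_0(p,q)$, $\SU(p,q)$ or $\Sp(p,q)$. There $\overline{B_q}$ is exactly $\{w(k):k\in K\}$, the set of principal $q\times q$ blocks of the $p\times p$ factor of $k$, and Kostant's convexity theorem gives $C(X)=\{H(e^Xk):k\in K\}$. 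The formula $H(e^Xk)=H^A\bigl((\cosh D_X+\sinh D_X\,w(k)U^*)U\bigr)$ from \cite{Sawyer2} then places every $H(e^Xk)$ in some $C^A(X(w))$, giving $C(X)\subseteq\cup_w C^A(X(w))$; conversely, for any $w_0\in\overline{B_q}$ an explicit $k$ is built with $X(w_0)=H(e^Xk)\in C(X)$, whence $C^A(X(w_0))\subseteq C(X)$ by convexity and $W^A$-invariance of $C(X)$. Both inclusions are thus obtained from known structural results in two or three lines each.

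Your direct route via Proposition~\ref{CBq} and Remark~\ref{CXA} could in principle be made to work, but the crux you identify --- the majorization $\sum_{k\le r}x_k(w)\le\sum_{k\le r}x_k$ --- is only half of what is needed. Elements $H\in C^A(X(w))$ satisfy $\sum_k h_{i_k}\le\sum_{k\le r}x_k(w)$, not $\sum_k|h_{i_k}|$; to reach the $BC$-inequalities \eqref{CX} you must show $X(w)\in C(X)$ itself, and that requires \emph{two-sided} control of the log-singular-values of $Z(X,w)$ (equivalently, partial-product bounds for both $Z(X,w)$ and $Z(X,w)^{-1}$). Your remark that ``sign changes are in $W$'' does not bridge this, since those sign changes act on $H$, not on the fixed $X(w)$. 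A Ky Fan / exterior-power argument may well close this, but you have not supplied it, and doing so amounts to reproving the containment half of Kostant convexity for type $BC$ by hand. Your reverse inclusion via diagonal $w$ and an unspecified intermediate-value deformation is likewise only a sketch (and note your own observation that $Z(X,0)=\cosh X$ does \emph{not} give $X(0)=X$, so the proposed starting point of the deformation already needs adjusting). The paper's reduction to the group case sidesteps all of this by invoking Kostant's theorem as a black box.
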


\begin{proof}
Suppose for now that $p\geq 2\,q$ is an integer (we are then in the geometric setting).  
For $k=\left[\begin{array}{cc}U&0\\0&V\end{array}\right]\in K=\SU(q,\F)\times\SU(p,\F)$, 
let $w(k)$ be the $q\times q$ principal minor of $V$.  It is not difficult to show that under the above conditions on $p$, 
$\overline{B_q}=\{w=w(k)\colon k\in K\}$.  Indeed, if $w=k_1\,\diag[\sigma_1,\dots,\sigma_q]\,k_2\in \overline{B_q}$, then it suffices to take $k$ with $U\in \SU(q,\F)$ arbitrary and 
$V=\left[\begin{array}{cc}A&B\\C&D\end{array}\right]\,\diag[1,\dots,1,\alpha]$ with $A=w$, $B=k_1\,\Sigma_B$ where $\Sigma_B$ is a $q\times(p-q)$ matrix with $(\Sigma_B)_{ii}=\sqrt{1-\sigma_i^2}$ and zero elsewhere, $C=\Sigma_C\,k_2$ where $\Sigma_C$ is a $(p-q)\times $ matrix with $(\Sigma_C)_{ii}=-\sqrt{1-\sigma_i^2}$ and zero elsewhere, $D=\diag[\sigma_1,\dots,\sigma_q,\overbrace{1,\dots,1}^{p-q}]$ and $|\alpha|=1$ is chosen so that $\det V=1$.  This proves that $\overline{B_q}\subseteq\{w=w(k)\colon k\in K\}$.  The reverse inclusion is straightforward.

Given the definition of $X(w)$ and of $B_q$, it is therefore sufficient to prove the proposition in the group case with $p\geq 2\,q$.

With $k$ as above, $H(e^X\,k)=H^A((\cosh D_X+\sinh D_X\,w(k)\,U^*)\,U)\in C^A(X(w(k)\,U^*))$ (\cite{Sawyer2}).  Since $C(X)=\{H(e^X\,k)\colon k\in K\}$ this implies that $C(X)\subseteq\cup_{w\in\overline{B_q}}\,C^A(X(w))$.  On the other hand, let $w_0\in \overline{B_q}$; we have 
$\cosh D_X+\sinh D_X\,w_0=k_1\,e^{X(w_0)}\,k_2$ with $k_i\in \U(q,\F)$, $i=1$, 2.  Hence, 
$X(w_0)=H^A(k_1^*\,(\cosh D_X+\sinh D_X\,w_0)\,k_2^*)=H^A((\cosh D_X+\sinh D_X\,w_0)\,k_2^*)=H^A((\cosh D_X+\sinh D_X\,(w_0\,k_2^*)\,k_2)\,k_2^*)
=H(e^X\,k)$ where $k=\left[\begin{array}{cc}k_2^*&0\\0&V\end{array}\right]$ and $V$ is built as above from $w=w_0\,k_2^*$.  This shows that
$X(w_0)\in C(X)$ and therefore that $C^A(X(w_0))\subseteq C(X)$ since $C(X)$ is $W^A$ invariant and convex.
\end{proof}

In what follows, $C^A(X(w))^\circ$ means the interior of $C^A(X(w))$ in the relative topology of $\{H\in\a^A\colon \tr H=\tr X(w)\}$.

\begin{theorem}\label{main}
Let $q\geq 1$ be an integer and suppose $p > 2\,q-1$ (we assume that $p$ is real here). Then the support of $f\mapsto \mathcal{A}^\ast(f)(X)$ is $C(X)$.
\end{theorem}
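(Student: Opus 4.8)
The plan is to establish the two inclusions separately, relying on the decomposition $C(X)=\bigcup_{w\in\overline{B_q}}C^A(X(w))$ from Proposition~\ref{XC} and on the known fact (Remark~\ref{CXA}) that the support of $f\mapsto(\mathcal{A}^A)^*(f)(Y)$ is exactly $C^A(Y)$ in the type $A$ setting. For the inclusion $\operatorname{supp}\bigl(f\mapsto\mathcal{A}^*(f)(X)\bigr)\subseteq C(X)$, I would argue directly from the integral formula \eqref{Abel}: for $w\in B_q$ the matrix $Z(X,w)$ is nonsingular, its singular values give $X(w)$, and the type $A$ dual Abel transform $(\mathcal{A}^A)^*(f)(Z(X,w))$ is supported on $C^A(X(w))\subseteq C(X)$ (using Proposition~\ref{XC}, noting $w\in B_q\subseteq\overline{B_q}$). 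Since the outer integration is over $B_q$ and the weights $|\det Z(X,w)|^{-d(p+1)/2+1}$, $m_p(w)$ are finite and nonnegative there, if $f$ vanishes on a neighbourhood of $C(X)$ then the whole integrand vanishes and $\mathcal{A}^*(f)(X)=0$; hence the support is contained in the closed set $C(X)$.

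For the reverse inclusion $C(X)\subseteq\operatorname{supp}\bigl(f\mapsto\mathcal{A}^*(f)(X)\bigr)$, the idea is to show that no point of $C(X)$ can be separated from the support. Fix $H_0\in C(X)$; by Proposition~\ref{XC} choose $w_0\in\overline{B_q}$ with $H_0\in C^A(X(w_0))$. The strategy is to pick a test function $f\geq 0$ supported near $H_0$ with $(\mathcal{A}^A)^*(f)$ strictly positive on a neighbourhood (in the affine slice $\{\operatorname{tr}=\operatorname{tr}X(w_0)\}$) of $H_0$ inside $C^A(X(w_0))^\circ$; this is possible by Remark~\ref{CXA} when $H_0$ is in the relative interior, and one then reaches boundary points of $C(X)$ by the Weyl invariance of the support (Remark~\ref{WINV}) together with a limiting argument. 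The key analytic point is that the set of $w\in B_q$ for which $X(w)$ is close to $X(w_0)$ — equivalently, for which $Z_2(X,w)=Z(X,w)^*Z(X,w)$ has eigenvalues close to those of $Z_2(X,w_0)$ — has positive measure: here I would invoke Corollary~\ref{eigen} applied to the analytic map $w\mapsto Z_2(X,w)$ to control the locus where the eigenvalue configuration degenerates, and argue that on a positive-measure subset of $B_q$ the contribution $(\mathcal{A}^A)^*(f)(Z(X,w))\geq 0$ is in fact strictly positive, so the integral \eqref{Abel} is strictly positive. Thus $\mathcal{A}^*(f)(X)>0$ for an $f$ supported arbitrarily near $H_0$, forcing $H_0$ into the support.

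The main obstacle I anticipate is the second inclusion, specifically handling boundary points of $C(X)$ and verifying that the relevant set of $w$'s genuinely has positive $dw$-measure rather than merely being nonempty. The difficulty is that $C(X)=\bigcup_w C^A(X(w))$ is a union over a parameter space, and a given $H_0$ might lie in $C^A(X(w_0))$ only for $w_0$ on the boundary $\partial B_q$ (where $Z(X,w)$ can be singular and the weight $|\det Z(X,w)|^{-d(p+1)/2+1}$ blows up) or only in the relative boundary of $C^A(X(w_0))$. I would address this by: (i) first proving strict positivity of $\mathcal{A}^*(f)(X)$ for $H_0$ in the interior $C(X)^\circ$ — where one can find $w_0\in B_q$ with $H_0\in C^A(X(w_0))^\circ$ and then use openness of the relevant eigenvalue condition from Corollary~\ref{eigen} to get a positive-measure set of good $w$'s — and (ii) then extending to all of $C(X)$ by taking closures, since the support is closed and $C(X)=\overline{C(X)^\circ}$ (as $\dim C(X)=\dim\a$ whenever $X\neq 0$, and the case $X=0$ is trivial). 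The hypothesis $p>2q-1$ is exactly what keeps the exponent $p\,d/2-d(q-1/2)-1>-1$ so that $m_p$ is locally integrable on $B_q$, and what makes Theorem~\ref{phiBC} and formula \eqref{Abel} valid; I would flag where this is used.
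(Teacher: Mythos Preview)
Your overall strategy matches the paper's: both arguments rest on the decomposition $C(X)=\bigcup_{w\in\overline{B_q}}C^A(X(w))$ from Proposition~\ref{XC} together with the positivity of the type~$A$ density on $C^A(X(w))^\circ$, followed by a closure argument. Your treatment of the inclusion $\operatorname{supp}\subseteq C(X)$ is correct and essentially identical to the paper's.

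The gap is in your step (i). You assert that for $H_0\in C(X)^\circ$ one can find $w_0\in B_q$ with $H_0\in C^A(X(w_0))^\circ$, but Proposition~\ref{XC} only supplies some $w_0\in\overline{B_q}$ with $H_0\in C^A(X(w_0))$---neither the openness of $B_q$ nor of the relative interior comes for free. This statement is in fact the content of the second inclusion of Proposition~\ref{CC}, whose proof requires a nontrivial explicit construction and is postponed until \emph{after} the present theorem. The paper's proof of Theorem~\ref{main} sidesteps this entirely: rather than locating a single good $w_0$ for a fixed $H_0$, it establishes directly that $\overline{\bigcup_{w\in B_q}C^A(X(w))^\circ}=C(X)$ via an $\epsilon/3$ approximation (first perturb $H_0$ off $\R\,I_q$, then perturb $w\in\overline{B_q}$ to $w'\in B_q$ using only the continuity of $w\mapsto X(w)$, then perturb into $C^A(X(w'))^\circ$). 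This is more elementary than what you outline---no appeal to Corollary~\ref{eigen} or to any positive-measure argument is needed at this stage. You should also isolate the case $q=1$, since there each $C^A(X(w))$ collapses to a single point and the relative-interior reasoning degenerates; the paper handles this by a direct one-line computation.
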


\begin{proof}
If $X=0$ then $\mathcal{A}^*(f)(X)=f(X)$ and the support is $\{X=0\}=C(X)$.  Based on Remark \ref{WINV}, we can assume that $X\in\overline{C_q^+}\setminus\{0\}$. 

If $q=1$ in \eqref{Abel}, then
\begin{align*}
\lefteqn{\mathcal{A}^*(f)(X)=\int_{B_1}\,f(\log(|\cosh x_1+\sinh x_1\,w|))}
\\&\qquad\qquad\qquad\qquad
\,\cdot|\cosh x_1+\sinh x_1\,w|^{-d\,(p+1)/2+1}\,dm_p(w)\,dw;
\end{align*}
$\log(|\cosh x_1+\sinh x_1\,w|)$ takes the full range between $-x_1$ and $x_1$ for $w\in \overline{B_1}=\{w\colon |w|\leq1\}$.  The result follows in this case.

Assume now that $q\geq2$ and use the notation of Lemma \ref{BqX}.
We have
\begin{align*}
\hbox{support}(\mathcal{A}_X^*)
=\overline{\cup_{w\in B_q}\,C^A(X(w))^\circ}
=\cup_{w\in \overline{B_q}}\,C^A(X(w))=C(X).
\end{align*}
The first equality follows from \eqref{Abel} and the fact that the density of the measure $f\mapsto(\mathcal{A}^A)^*(f)$ is positive on $C^A(X(w))$ (\cite{Sawyer1}).  Now, let $H_0\in C(X)$ and pick $\epsilon>0$.  There exists $H\in C(X)\setminus \R\,I_q$ such that $\|H-H_0\|<\epsilon/3$.  By Proposition \ref{XC}, $H=\sum_{s\in W^A}\,a_s\,s\cdot X(w)$ with $0\leq a_s\leq 1$, $\sum_{s\in W^A}\,a_s=1$ and $w\in \overline{B_q}$.  Note that since $H\not\in\R\,I_q$, the same is true of $X(w)$.  Since  the map $w\mapsto X(w)$ is continuous on $\overline{B_q}$, there exists $w'\in B_q$ with $\|X(w')-X(w)\|<\epsilon/3$ and $X(w')\not\in \R\,I_q$ (this is necessary to ensure that $\overline{C^A(X(w'))^\circ}=C^A(X(w'))$).  Hence, $H'=\sum_{s\in W^A}\,a_s\,s\cdot X(w')$ satisfies $\|H'-H\|<\epsilon/3$.  Finally, there exists $H''\in C^A(X(w'))^\circ$ such that $\|H''-H'\|<\epsilon/3$ and therefore $\|H''-H_0\|<\epsilon$. 
The result follows.
\end{proof}

We will end this subsection by describing more closely the interior of $C(X)$ which will be useful when showing that the density of the measure 
$f\mapsto \mathcal{A}^*(f)(X)$ is strictly positive on $C(X)^\circ$.

\begin{definition}
For $X\in \overline{C_q^+}$, let $\mathcal{U}(X)=\{\diag[u_1,\dots,u_q]\colon u_1\leq u_2\leq\dots\leq u_q,~|u_i|<x_i~\hbox{if $x_i>0$ and $u_i=0$ otherwise}\}$.  Note that $U=X(w)\in\mathcal{U}(X)$ where $w=\diag[y_1,\dots,y_q]\in B_q$ with $y_i=(e^{u_i}-\cosh x_i)/\sinh x_i$ if $x_i\not=0$ and $y_i$ arbitrary in the interval $(-1,1)$ if $x_i=0$.
\end{definition}

The definition of the set $B_q(X)$ in the next lemma will allow us to avoid the difficulty that arises when $C^A(X(w))$ consists of only one point.

\begin{lemma}\label{BqX}
For $X\in C_q$, define $f_X\colon B_q\to \R$ by $f_X(w)=\tr X(w)$ and $B_q(X)=\{w\in B_q\colon \hbox{$df_X$ is surjective at $w$ and $X(w)\not\in\R\,I_q$}\}$. If $q\geq 2$ and $X\in C_q\setminus\{0\}$ then $B_q(X)$ is open and dense in $B_q$.  Furthermore, $\mathcal{U}(X)\setminus\R\,I_q\subseteq B_q(X)$.  
\end{lemma}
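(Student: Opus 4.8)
The plan is to first record that $Z(X,w)$ is invertible for every $w\in B_q$: writing $Z(X,w)=\cosh X\,(I+\tanh X\,w)$ and noting that $\tanh X\,w$ has operator norm at most $(\max_i|\tanh x_i|)\,\|w\|_{\mathrm{op}}<1$, the factor $I+\tanh X\,w$ is invertible. Hence $X(w)=a^A(Z(X,w))$ is well defined on all of $B_q$, and $f_X(w)=\tr X(w)=\tfrac12\log\det Z_2(X,w)$ is real-analytic there. For the first two assertions no normalization of $X$ is needed (and the general case reduces to $X\in\overline{C_q^+}\setminus\{0\}$ via Remark \ref{WINV}); for the third, $\mathcal U(X)$ is defined only when $X\in\overline{C_q^+}$, so there we work with such $X$.

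Next I would compute the differential of $f_X$. From $dZ=\sinh X\,dw$ and $d\log\det Z_2=\tr\!\big(Z_2^{-1}\,dZ_2\big)$ one obtains $df_X|_w(v)=\operatorname{Re}\tr\!\big(Z(X,w)^{-1}\sinh X\,v\big)$ for $v\in M_q(\F)$, the real part being automatic over $\R$ and coming from the two terms of $dZ_2=(\sinh X\,v)^*Z+Z^*\sinh X\,v$ over $\C$ and $\H$. Substituting $v=(Z(X,w)^{-1}\sinh X)^*$ gives $\|Z(X,w)^{-1}\sinh X\|_{\mathrm{HS}}^2$, which is strictly positive because $\sinh X\neq 0$ (as $X\neq 0$). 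Thus $df_X$ is surjective at every point of $B_q$, and consequently $B_q(X)=\{w\in B_q:X(w)\notin\R\,I_q\}$.

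It then remains to identify this set. Since $X(w)\in\R\,I_q$ exactly when all eigenvalues of the Hermitian matrix $Z_2(X,w)$ coincide, $B_q(X)$ is precisely the set $U_2$ attached by Corollary \ref{eigen} to the analytic map $w\mapsto Z_2(X,w)\in\mathrm{Symm}(q,\F)$. By that corollary $B_q(X)$ is open and is either empty or dense in $B_q$, so it suffices to exhibit one point of it. Taking $w=0$ works unless all $|x_i|$ are equal to some $c>0$, and in that exceptional case $w=\diag[y_1,0,\dots,0]$ with $y_1\neq 0$ small works, since $\sinh x_1\neq 0$ forces $(\cosh x_1+\sinh x_1\,y_1)^2\neq\cosh^2 x_1$; here $q\ge2$ is used. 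Hence $B_q(X)$ is open and dense.

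Finally, for $U=\diag[u_1,\dots,u_q]\in\mathcal U(X)$ and the associated diagonal $w=\diag[y_1,\dots,y_q]\in B_q$ of the note in the definition of $\mathcal U(X)$, a direct check gives $Z(X,w)=\diag[e^{u_1},\dots,e^{u_q}]$ (because $\cosh x_i+\sinh x_i\,y_i=e^{u_i}$ when $x_i>0$, and $=1=e^{0}$ when $x_i=0$), so $X(w)$ is the nonincreasing rearrangement of $(u_1,\dots,u_q)$. Therefore $X(w)\notin\R\,I_q$ precisely when $U\notin\R\,I_q$, and together with the preceding paragraph this yields $\mathcal U(X)\setminus\R\,I_q\subseteq B_q(X)$; in particular $B_q(X)\neq\emptyset$, since $\mathcal U(X)\setminus\R\,I_q$ is plainly nonempty when $q\ge2$ and $X\neq0$. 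The only genuinely delicate points are performing the $\det Z_2$ and differential computations uniformly over $\F=\R,\C,\H$ and verifying the invertibility of $Z(X,w)$ on $B_q$; the remainder is bookkeeping.
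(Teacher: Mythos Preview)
Your proof is correct and in fact slightly sharper than the paper's. The paper defines $V_X=\{w\in B_q:\ df_X|_w\ \hbox{surjective}\}$, argues by analyticity that $V_X$ is open and either empty or dense, and then shows nonemptiness by computing a single directional derivative $df_X|_w(E_{1,1})=\sinh x_1/(\cosh x_1+y_1\sinh x_1)$ at a diagonal $w\in\mathcal U(X)$; it then intersects with $U_2$ from Corollary~\ref{eigen}. You instead compute the full differential $df_X|_w(v)=\operatorname{Re}\tr\bigl(Z(X,w)^{-1}\sinh X\,v\bigr)$ and, by plugging in $v=(Z(X,w)^{-1}\sinh X)^*$, conclude that $df_X$ is surjective at \emph{every} point of $B_q$, so that $B_q(X)=U_2$ outright. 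This is a genuine simplification: it removes the need to invoke analyticity for $V_X$ and makes the inclusion $\mathcal U(X)\setminus\R I_q\subseteq B_q(X)$ immediate once you check $X(w)\notin\R I_q$ for those diagonal $w$. Your invertibility argument $Z(X,w)=\cosh X\,(I+\tanh X\,w)$ with $\|\tanh X\,w\|_{\mathrm{op}}<1$ is also a cleaner way to see that $f_X$ is globally defined than the paper's implicit treatment. The one place where care is warranted---and you flag it yourself---is the quaternionic case: there $\det Z_2$ should be read as the Moore determinant (product of the real eigenvalues of the Hermitian matrix $Z_2$), and the identity $d\log\det Z_2=\operatorname{Re}\tr(Z_2^{-1}\,dZ_2)$ as well as the cyclicity used in reducing the two terms of $dZ_2$ to $2\operatorname{Re}\tr(Z^{-1}\sinh X\,v)$ hold only after taking the real part of the quaternionic trace; with that understood, your computation goes through uniformly over $\F=\R,\C,\H$.
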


\begin{proof}
We can assume without loss of generality that $X\in \overline{C_q^+}\setminus\{0\}$ (refer to the discussion in Remark \ref{WINV}).
Observe that we can write $f_X(w)=\log(\det(Z_2(X,w)))/2$ and that $f_X$ and therefore $df_X$ are analytic on $B_q$.  Let $V_X=\{w\in B_q\colon \hbox{$\left.df_X\right|_w$ is surjective}\}$.  Since the rank of $\left.df_X\right|_w$ is a matter of a determinant being nonzero, $V_X$ is open and is either empty or dense in $B_q$.  Taking 
$w=\diag[y_1,\dots,y_q]\in \mathcal{U}(X)$, we have 
$\left.df_X\right|_w(E_{1,1})=\left.\frac{d~}{dt}\right|_{t=0}\,f_X(w+t\,E_{1,1})
=\sinh x_1/(\cosh x_1+y_1\,\sinh x_1)\not=0$.  Hence $V_X$ is a dense open set in $B_q$.

Note also that $w\in B_q(X)$ is equivalent to $w\in V_X$ and $\dim C^A(X(w))=q-1$ (this is the maximum it can be since $H\in C^A(X(w))$ implies $\tr H=\tr X(w)$).
Observe that $w\in B_q(X)$ if and only if $w\in V_X$ and $f(w)=Z_2(X,w)$ is not a multiple of the identity.  Consider now Corollary \ref{eigen} with $r=2$ and observe that 
$B_q(X)=V_X\cap U_2$.  Since $X\in \overline{C_q^+}\setminus\{0\}$ then either $X=\mu\,I_q$, $\mu>0$ or $X$ has at least two distinct diagonal entries.  In the first case, we observe that
$w=\diag[\mu/2,-\mu/2,0\dots,0]\in B_q(X)=V_X\cap U_2$.  In the second case, if $X$ has at least two distinct non-negative diagonal entries, then $w=0\in B_q(X)=V_X\cap U_2$.  In both cases, the result follows from Corollary \ref{eigen}.
\end{proof}

The following technical result will be useful in the proof of Proposition \ref{CC} below.

\begin{lemma}\label{special}
Suppose $X\in\overline{C_q^+}\setminus\{0\}$.  Let $w_0=\diag[y_1,\dots,y_q]\in B_q(X)$ be such that
$X(w_0)=\diag[u_1,\dots,u_q]$ with $u_i\geq u_{i+1}$ for all $i$.  Then for $b>0$ small enough, $w_0+b\,E_{1,q}\in B_q(X)$ and $X(w_0+b\,E_{1,q})=\diag[u_1+\delta,u_2,\dots,u_{q-1},	u_q-\delta]$ for some $\delta>0$
\end{lemma}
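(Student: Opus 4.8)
The plan is to analyze directly how the singular values of $Z(X,w_0 + b\,E_{1,q})$ vary as the small parameter $b$ moves away from $0$, using first-order perturbation theory for the eigenvalues of the Hermitian matrix $Z_2(X,w) = Z(X,w)^*\,Z(X,w)$. First I would set $Z_0 = Z(X,w_0) = \cosh X + \sinh X\,w_0$, which is a $q\times q$ matrix whose singular values are $e^{u_1} \geq \dots \geq e^{u_q}$; since $w_0 \in B_q(X)$ we have $X(w_0) \notin \R\,I_q$, and moreover the hypothesis that $X(w_0)$ is already in decreasing order means $u_1 > u_q$, so $e^{u_1}$ and $e^{u_q}$ are distinct. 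Writing $Z(X,w_0 + b\,E_{1,q}) = Z_0 + b\,\sinh X\,E_{1,q}$, the key point is that $\sinh X\,E_{1,q}$ has its only nonzero entry in position $(1,q)$ with value $\sinh x_1 > 0$ (here I use $X \in \overline{C_q^+}\setminus\{0\}$, which forces $x_1 > 0$).

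Next I would expand $Z_2(X, w_0 + b\,E_{1,q}) = Z_2(X,w_0) + b\,(Z_0^*\,\sinh X\,E_{1,q} + E_{q,1}\,\sinh X\,Z_0) + b^2\,(\cdots)$. Let $Z_0 = k_1\,e^{X(w_0)}\,k_2$ be a singular value decomposition (with $k_1, k_2 \in \U(q,\F)$), so that $Z_2(X,w_0) = k_2^*\,e^{2X(w_0)}\,k_2$. Because the diagonal entries of $X(w_0)$ are already decreasing, I can choose the SVD so that, after conjugating everything by $k_2$, the unperturbed matrix $e^{2X(w_0)}$ is diagonal with entries $e^{2u_1} > \dots$ and the largest and smallest are simple. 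First-order Hermitian perturbation theory (Rellich) then says the perturbed eigenvalues are real-analytic in $b$, and the derivative at $b = 0$ of the $i$-th eigenvalue is the $(i,i)$ diagonal entry of the conjugated first-order term $V := k_2\,(Z_0^*\,\sinh X\,E_{1,q} + E_{q,1}\,\sinh X\,Z_0)\,k_2^*$. A direct computation shows $V_{ii} = 2\,\Re\big((k_2 E_{q,1}\sinh X\, Z_0\, k_2^*)_{ii}\big) = 2\,\sinh x_1\,\Re\big(\overline{(k_1 e^{X(w_0)})_{1,i}}\,(k_2)_{q,i}\, e^{u_i}\big)$ — the precise constant is routine; what matters is the generic nonvanishing argument below. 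The eigenvalues in the middle positions $2,\dots,q-1$ need not be simple, but their \emph{sum} $\sum_{i=2}^{q-1} e^{2u_i} = \det(Z_2)/ (e^{2u_1}e^{2u_q}) \cdot (\text{product structure})$ — better: use $\tr Z_2(X,w) $ and $\det Z_2(X,w)$ together with the fact that the extreme eigenvalues move. Since $\tr V = 0$ is false in general, I instead argue: the top eigenvalue strictly increases (for $b > 0$ small, if the relevant derivative is positive; otherwise replace $b$ by $-b$), the bottom eigenvalue strictly decreases by the same first-order amount because $f_X(w) = \tfrac12\log\det Z_2(X,w)$ is constant to first order along this curve when $w_0 \in B_q(X)$ — wait, $df_X|_{w_0}$ is surjective, not zero. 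So I would instead impose the direction $E_{1,q}$ is \emph{not} arbitrary: one must check $df_X|_{w_0}(E_{1,q})$; if nonzero, correct by adding a small multiple of a diagonal direction to keep $\det Z_2$ fixed, i.e. keep $\tr X(w) $ fixed, which preserves $X(w_0)$'s extreme entries to first order only if the correction is second-order. The cleanest route: show $\det Z_2(X, w_0 + b\,E_{1,q})$ is constant in $b$, because $\det(Z_0 + b\,\sinh X\,E_{1,q}) = \det Z_0 \cdot \det(I + b\,Z_0^{-1}\sinh X\,E_{1,q}) = \det Z_0\,(1 + b\,(Z_0^{-1}\sinh X)_{q,1})$, and one can choose the SVD / arrange via a prior rotation that this entry vanishes — actually $E_{1,q}$ is rank one so $\det(Z_0 + b\,\sinh X E_{1,q}) = \det Z_0 + b\,\sinh x_1 \cdot \mathrm{cof}$, a genuinely affine function that is generally nonconstant. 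Therefore the honest statement is: $\tr X(w_0 + bE_{1,q})$ changes, so I should \emph{not} claim $u_q \mapsto u_q - \delta$ with the \emph{same} $\delta$ unless $\det$ is preserved. Re-reading the lemma, it does assert exactly $\diag[u_1+\delta,u_2,\dots,u_{q-1},u_q-\delta]$, so the trace \emph{is} preserved; hence I must use that $w_0 \in B_q(X)$ and adjust: replace $w_0 + b\,E_{1,q}$ by $w_0 + b\,E_{1,q} + g(b)$ where $g(b)$ is a diagonal perturbation of order $O(b)$ chosen so that $f_X$ stays constant — this is possible precisely because $df_X|_{w_0}$ is surjective — and then re-run the perturbation computation; the diagonal correction contributes at first order to the eigenvalues in a way I can solve for, and a dimension/genericity count (invoking Corollary \ref{eigen}, or perturbing $w_0$ slightly inside $B_q(X)$ to make all the relevant inner products nonzero) shows the extreme eigenvalues still split to first order while the trace is pinned, forcing the symmetric pattern $+\delta, -\delta$.

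Concretely the steps are: (1) reduce to $X \in \overline{C_q^+}\setminus\{0\}$ so $x_1 > 0$ and the unperturbed singular values have $e^{u_1} > e^{u_q}$; (2) set up the rank-one perturbation $Z_0 + b\,\sinh X\,E_{1,q}$ and, using that $df_X|_{w_0}$ is surjective, correct it by an $O(b)$ diagonal term so that $\det Z_2$ is held constant; (3) apply Rellich's theorem to get real-analytic eigenvalue branches of $Z_2$ and compute the first derivatives of the top and bottom branches via diagonal entries of the conjugated perturbation $V$; (4) show these first derivatives are nonzero — here, if the chosen $w_0$ happens to be degenerate I first move $w_0$ within $B_q(X)$ (which is open by Lemma \ref{BqX}) to a nearby point where the relevant entries of the SVD factors $k_1, k_2$ are nonzero, a density argument in the spirit of Corollary \ref{eigen}; replacing $b$ by $-b$ if necessary, arrange the top branch to increase; (5) since the middle singular values can be taken to vary continuously and the extreme ones are simple, for $b$ small the singular values stay in decreasing order, $w_0 + (\text{perturbation}) \in B_q(X)$ by openness, and the preserved trace forces $X(w_0 + b E_{1,q}) = \diag[u_1+\delta, u_2', \dots, u_{q-1}', u_q - \delta]$; (6) finally argue the middle entries are in fact unchanged, $u_i' = u_i$ — this follows because the perturbation $\sinh X\, E_{1,q}$ together with the diagonal correction can be arranged to leave the $e^{2u_2},\dots,e^{2u_{q-1}}$ eigenspaces invariant to first order (the off-diagonal coupling $V$ connects position $1$ to the others, but one shows $V_{ii} = 0$ for $2 \le i \le q-1$ for a suitable normalization, or absorbs any middle motion into the freedom in choosing the correction $g(b)$), so only the outer pair moves and by trace-preservation it moves antisymmetrically.

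\textbf{Main obstacle.} The delicate point is step (2)–(4): ensuring the \emph{trace-preserving correction} exists and that after it the extreme singular-value branches genuinely split to first order (nonvanishing derivative) while the interior singular values are undisturbed. This is where one must use both defining properties of $B_q(X)$ — surjectivity of $df_X|_{w_0}$ (to build the correction) and $X(w_0) \notin \R\,I_q$ (to guarantee $u_1 > u_q$, so the extreme eigenvalues are simple and first-order perturbation theory applies cleanly) — together with an openness/density argument (Lemma \ref{BqX}, Corollary \ref{eigen}) to move to a generic $w_0$ if the given one sits at a coincidental zero of the first-order coefficient. Handling the possibly-repeated middle eigenvalues without full analyticity there, and pinning down that they don't move, is the part that needs the most care.
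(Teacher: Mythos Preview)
Your approach misses the key simplification that makes this lemma nearly trivial: the hypothesis says $w_0$ is \emph{diagonal}. Consequently $Z_0 = \cosh X + \sinh X\,w_0$ is itself diagonal, with real entries $z_i = \cosh x_i + y_i\,\sinh x_i$. Adding $b\,E_{1,q}$ to $w_0$ puts a single off-diagonal entry $b\,\sinh x_1$ in position $(1,q)$ of $Z$, so $Z$ is upper triangular and $\det Z = \det Z_0$ \emph{exactly} --- there is no need for any trace-preserving correction $g(b)$, and indeed introducing one would change the statement, which is about $w_0 + b\,E_{1,q}$ specifically. Computing $Z_2 = Z^*Z$ directly, one finds it is block diagonal: the $1\times1$ blocks $z_i^2$ for $2\le i\le q-1$ are untouched, and the positions $\{1,q\}$ form a $2\times 2$ block
\[
\begin{pmatrix} z_1^2 & z_1\,b\,\sinh x_1\\ z_1\,b\,\sinh x_1 & z_q^2 + b^2\,\sinh^2 x_1\end{pmatrix}.
\]
This is exactly the paper's reduction to the case $q=2$. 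The determinant of this block equals $z_1^2\,z_q^2$ for all $b$, so the two log-eigenvalues have constant sum $u_1+u_q$; its trace is $z_1^2 + z_q^2 + b^2\sinh^2 x_1$, which strictly increases, forcing the larger eigenvalue up and the smaller one down by the same amount $\delta>0$.

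So your perturbation-theoretic machinery (Rellich, SVD conjugation, genericity arguments, corrections to pin the trace) is unnecessary, and several of your worries --- whether $\det Z_2$ is preserved, whether the middle eigenvalues move, whether first-order coefficients might vanish --- dissolve once you exploit the diagonal structure of $w_0$. The step where you propose replacing $w_0 + b\,E_{1,q}$ by $w_0 + b\,E_{1,q} + g(b)$ is not just overcomplicated but would prove a different statement. The fix is simply to compute $Z_2$ explicitly using that $Z_0$ is diagonal.
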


\begin{proof}
This situation can be reduced to considering the case when $X=\diag[x_1,x_2]$, $w_0=\diag[y_1,y_2]$ and $X(w_0)=\diag[u_1,u_2]$.  One only has to look at the eigenvalues of $Z_2(w_0+b\,E_{1,2})=\left[ \begin {array}{cc} e^{2\,u_1}&e^{u_1}
\sinh  x_1 \,b\\ e^{u_1}
\sinh  x_1 \,b&e^{2\,u_2}+\sinh^2x_1\,b^2\end {array} \right] $ which is elementary.
\end{proof}

The following result will allow us to show that the density of the measure $f\mapsto \mathcal{A}^*(f)(X)$ is strictly positive on $C(X)^\circ$ when $X\not=0$.

\begin{proposition}\label{CC}
Let $q\geq 2$.
For $X\in C_q$,  $\cup_{w\in B_q(X)}\,C^A(X(w))^\circ\subseteq C(X)^\circ$ and $C(X)^\circ\cap\overline{C_q^+}\subseteq\cup_{w\in B_q(X)}\,C^A(X(w))^\circ$.
\end{proposition}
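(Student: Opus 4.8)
The plan is to establish the two inclusions separately, in both cases passing through the description of $C(X)$ by inequalities from Proposition \ref{CBq} and the type-$A$ description from Remark \ref{CXA}.

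First I would prove $\cup_{w\in B_q(X)}\,C^A(X(w))^\circ\subseteq C(X)^\circ$. Fix $w\in B_q(X)$ and $H\in C^A(X(w))^\circ$; since $X(w)\not\in\R\,I_q$ (so $C^A(X(w))$ is genuinely $(q-1)$-dimensional and its relative interior is dense in it), $H$ satisfies the strict majorization inequalities \eqref{ineq} relative to $X(w)$ together with $\tr H=\tr X(w)$. By Proposition \ref{XC}, $X(w)\in C(X)$, so its ordered entries $u_1\geq\dots\geq u_q$ satisfy $\sum_{k=1}^r u_k\leq\sum_{k=1}^r x_k$ for $1\leq r\leq q$; but I want the \emph{strict} inequalities of Proposition \ref{CBq} for $H$. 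The point where strictness can fail is an index $r$ where $\sum_{k=1}^r u_k=\sum_{k=1}^r x_k$; here Lemma \ref{special} is the key tool: by perturbing $w$ within $B_q(X)$ using the off-diagonal $E_{1,q}$ (or more generally $E_{i,j}$) direction one can strictly increase a top partial sum of the eigenvalues of $X(w)$ while keeping $w\in B_q(X)$ and staying close, so that $H$ still lies in $C^A$ of the perturbed point. Iterating over the (finitely many) indices $r$ achieving equality, one produces $w'\in B_q(X)$ with all the relevant partial sums strict and $H\in C^A(X(w'))^\circ$, and then each of the inequalities \eqref{CX} for $H$ (with arbitrary distinct indices, after sorting $|h_{i_k}|$ in decreasing order and comparing with the top $x_k$'s) is strict; by the ``furthermore'' clause of Proposition \ref{CBq} this places $H\in C(X)^\circ$.

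Next I would prove $C(X)^\circ\cap\overline{C_q^+}\subseteq\cup_{w\in B_q(X)}\,C^A(X(w))^\circ$. Take $H\in C(X)^\circ\cap\overline{C_q^+}$, so by Proposition \ref{CBq} all inequalities $\sum_{k=1}^r h_{i_k}<\sum_{k=1}^r x_k$ are strict (and $h_1\geq\dots\geq h_q\geq0$). By Theorem \ref{main} and Proposition \ref{XC}, $H\in C^A(X(w))$ for some $w\in\overline{B_q}$; since $H$ is in the interior of $C(X)$ it cannot be an extreme point, so $C^A(X(w))$ is not a single point, hence $X(w)\not\in\R\,I_q$. It remains to move $w$ slightly into $B_q(X)$: using that the relative interior of $C^A(Y)$ varies upper-semicontinuously in $Y$ for $Y\not\in\R\,I_q$ and that $w\mapsto X(w)$ is continuous, together with density of $B_q(X)$ in $B_q$ (Lemma \ref{BqX}), one finds $w'\in B_q(X)$ near $w$ with $X(w')\not\in\R\,I_q$ and $\tr X(w')$ close to $\tr X(w)$; then $H$ — or rather a point obtained by normalizing $H$ to have trace $\tr X(w')$, which is within $C^A(X(w'))^\circ$ because the strict majorization survives a small perturbation — lies in $C^A(X(w'))^\circ$. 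A cleaner route, which I would prefer if it goes through, is: the strict inequalities for $H$ relative to $X$ give a little room, so directly exhibit $w'\in\mathcal{U}(X)\setminus\R\,I_q\subseteq B_q(X)$ (Lemma \ref{BqX}) with $X(w')$ ordered entries $u_i$ satisfying $h_i<u_1+\dots$ type strict majorization and $\tr X(w')=\tr H$; feasibility of such a $w'$ follows because $\mathcal{U}(X)$ realizes all diagonal tuples with $|u_i|<x_i$, which by an Horn–Schur argument majorize/are-majorized appropriately, and then $H\in C^A(X(w'))^\circ$ by Remark \ref{CXA}.

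The main obstacle I anticipate is the first inclusion — specifically, handling the case where several top partial sums of the eigenvalues of $X(w)$ simultaneously hit equality with the $x_k$'s, and verifying that the Lemma \ref{special}-type perturbation can be performed in \emph{each} block (between consecutive equality indices) independently without disturbing the already-corrected blocks and without leaving $B_q(X)$; this requires checking that the perturbation directions $E_{i,j}$ act on the corresponding $2\times2$ eigenvalue subproblems in the way Lemma \ref{special} describes, and that surjectivity of $df_X$ is preserved (which it is, being an open condition). A secondary subtlety is bookkeeping the passage between the "ordered eigenvalue" inequalities \eqref{ineq}/\eqref{CX2} and the "arbitrary distinct index, absolute value" form \eqref{CX}, i.e. checking that strictness in the former yields strictness in the latter after sorting — this is routine but must be stated carefully.
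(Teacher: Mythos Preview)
Your plan for the second inclusion --- explicitly exhibit $U=X(w_0)\in\mathcal{U}(X)\setminus\R I_q$ with $\tr U=\tr H$ and $H$ majorized by $U$, then use Lemma~\ref{special} to land in the relative interior --- is precisely the paper's route; the paper carries it out with a concrete choice ($u_k=x_k-\epsilon$ below a threshold index $j$, the leftover trace packed into $u_j$, zeros thereafter), and then applies Lemma~\ref{special} at that diagonal $w_0$.

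The first inclusion, however, has a real gap. Lemma~\ref{special} is stated only for \emph{diagonal} $w_0$, so it cannot be invoked at an arbitrary $w\in B_q(X)$; and in any case that perturbation \emph{raises} the top partial sums $u_1+\dots+u_r$, which is the wrong direction for what you claim. More importantly, the chain ``$h_{i_1}+\dots+h_{i_r}<u_1+\dots+u_r\le x_1+\dots+x_r$'' does not deliver the absolute-value inequalities of Proposition~\ref{CBq}: e.g.\ with $X(w)=\diag(2,0,-2)$ and $H'=\diag(-1,0,1)\in C^A(X(w))^\circ$ one has $|h'_1|+|h'_3|=2=u_1+u_2$, so no strict bound against the $u$'s is available. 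The paper avoids both issues by a contradiction argument that never moves $w$: if $|h'_{i_1}|+\dots+|h'_{i_r}|=x_1+\dots+x_r$, encode the signs in the linear functional $f_{I,J}(H)=\sum_{i\in I}h_i-\sum_{j\in J}h_j$. For $r<q$ (or $r=q$ with mixed signs) this functional is non-constant on $C^A(X(w_0))$; since $H'$ lies in the \emph{relative interior}, $f_{I,J}(H')$ is strictly below its maximum, attained at some $H''\in C^A(X(w_0))\subseteq C(X)$, and then $\sum_k|h''_{i_k}|\ge f_{I,J}(H'')>x_1+\dots+x_r$ contradicts $H''\in C(X)$. The residual case $r=q$ with all $h'_i$ of one sign forces $|f_X(w_0)|=|\tr H'|=\sum_k x_k$, an extremum of $f_X$ at the interior point $w_0$, contradicting $df_X|_{w_0}\neq0$ --- which is exactly why surjectivity of $df_X$ was built into the definition of $B_q(X)$. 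Replace your perturbation scheme for the first inclusion by this linear-functional argument.
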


\begin{proof}
Assume that $X\in \overline{C_q^+}$ and that $X\not=0$ (if $X=0$ then the result is straightforward).

We first show that if $H'\in C^A(X(w_0))^\circ$ and $w_0\in B_q(X)$ then $H'\in C(X)^\circ$.  Suppose that this is not the case: we have 
$|h'_{i_1}|+\dots+|h_{i_r}'|=x_1+\dots+x_r$ for some $r$ and distinct indices $i_k$. if $r<q$, let $I=\{i_k\colon h'_{i_k}\geq 0\}$,
$J=\{i_k\colon h'_{i_k}< 0\}$ and define $f_{I,J}(H)=\sum_{i\in I}\,h_i-\sum_{j\in J}\,h_j$.  Since $f_{I,J}$ is linear and not constant on 
$C^A(X(w_0))$ which is convex and compact, we have
\begin{align*}
|h'_{i_1}|+\dots+|h'_{i_r}|=f_{I,J}(H')<\max_{H\in C^A(X(w))}\,f_{I,J}(H)=f_{I,J}(H'')
\end{align*}
for some $H''\in C^A(X(w))$.  This means that $|h''_{i_1}|+\dots+|h''_{i_r}|\geq f_{I,J}(H'')>f_{I,J}(H')=x_1+\dots+x_r$ which contradicts $H''\in C(X)$.
If $r=q$ and not all $h_i$'s are of the same sign, the same reasoning applies.  Otherwise, if all $h'_i$'s are of the same sign, the map $f_X$ of Lemma \ref{BqX} would reach a maximum or a minimum at $w_0$ which is impossible since $\left.df_X\right|_w\not=0$ for all $w\in B_q(X)$.

We now show that $C(X)^\circ\cap\overline{C_q^+}\subseteq\cup_{w\in B_q(X)}\,C^A(X(w))^\circ$.  Pick $H\in C(X)^\circ\cap\overline{C_q^+}$.  
If $H=0$, let $y_i=(1-\cosh x_i)/\sinh x_i$ (if $x_i\not=0$) and $y_i\in (-1,1)$ arbitrary when $x_i=0$ and set $w_0=\diag[y_1,\dots,y_q]$.  We have $H=X(w_0)$ and, using Lemma \ref{special}, we have $H\in C^A(X(w_0+b\,E_{1.q}))^0$ for $b>0$ small enough. From the proof of Lemma \ref{BqX}, we conclude that $w_0+b\,E_{1.q}\in B_q(X)$ provided again that $b$ is small enough.

Suppose then that $H\not=0$ and let $j$ be the smallest index $k$ such that $\sum_{i=1}^k\,x_i> \sum_{i=1}^q\,h_i$.  Let $U$ be defined by the relations 
\begin{align*}
u_k&=\left\lbrace
\begin{array}{cl}
x_k-\epsilon&\hbox{if $k< j$}\\
(j-1)\,\epsilon+\sum_{i=1}^q\,h_i-\sum_{i=1}^{j-1}\,x_i&\hbox{if $k=j$}\\
0&\hbox{if $j<k \leq q$}\\
\end{array}
\right.
\end{align*}
where $0<\epsilon<\min\{x_k, 1\leq k\leq j-1,(\sum_{i=1}^{r}\,x_i-\sum_{i=1}^r\,h_i)/r,r=1,\dots,j,(\sum_{i=1}^{j}\,x_i-\sum_{i=1}^q\,h_i)/(j-1)\}$.

We easily verify the inequalities $u_1\geq u_2\geq\dots\geq u_q\geq 0$, $u_i< x_i$, when $x_i>0$ using the definition of $j$ and the restrictions on $\epsilon$.   We have $U=X(w_0)\in\mathcal{U}(X)$ and since $u_{j-1}
-u_j\geq \sum_{i=1}^j\,x_i-\sum_{i=1}^q\,h_i-j\,\epsilon>0$, $U\not\in \R\,I_q$ and therefore $w_0\in B_q(X)$ by Lemma \ref{BqX}.  

We verify that $H\in C^A(U)$ using Remark \ref{CXA}: 
since $U$ and $H\in\overline{(\a^A)^+}$,  we only have to show that $h_1+\dots+h_q= u_1+\dots+u_q$ which is straightforward and that
$h_1+\dots+h_r\leq u_1+\dots+u_r$ for every $r<q$.  For $r< j$, this follows immediately from the definition of $u_i$ and $\epsilon$ and when $j\leq r<q$,
we have $h_1+\dots+h_r\leq h_1+\dots+h_q=u_1+\dots+u_{r}$.

Using Lemma \ref{special} with $U=X(w_0)$ and noting that $H\in C^A(X(w_0+b\,E_{1,q}))^\circ$ with $w_0+b\,E_{1,q}\in B_q(X)$ provided $b$ is small enough, we can conclude.  
\end{proof}

\subsection{A Laplace-type expression for the generalized spherical functions}\label{LaplaceSec}

As mentioned in the Introduction, in the geometric setting, the measure $f\mapsto \mathcal{A}^*(f)(X)$ is absolutely continuous with respect to the 
Lebesgue measure on $\a$ provided $X\not=0$.  We now show that this remains true in the present context.

\begin{theorem}\label{KHX}
Let $q\geq 1$ be an integer and suppose $p > 2\,q-1$ (we assume that $p$ is real here).  There exists a measurable non-negative function $K$ defined on $C_q\times (C_q\setminus\{0\})$ such that for every measurable function $f$ on $C_q$ and every 
$X\in C_q\setminus\{0\}$, we have 
\begin{align*}
\mathcal{A}^*(f)(X)
&=\int_{C(X)^\circ}\,f(H)\,K(H,X)\,dH
\end{align*}
where $K(H,X)>0$ for all $H\in C(X)^\circ$.  In particular, the support of the map $H\mapsto K(H,X)$ is $C(X)$. 
\end{theorem}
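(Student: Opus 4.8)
The plan is to build $K(H,X)$ by pushing forward, via the change of variables $H = X(w)$ (or rather via a fibration over the simplex $B_q$), the product of the type-$A$ density and the extra Jacobian factors appearing in \eqref{Abel}, and then to check positivity on $C(X)^\circ$ using the geometry established in Proposition \ref{CC}. First I would reduce, using Remark \ref{WINV}, to the case $X\in\overline{C_q^+}\setminus\{0\}$; the general case then follows by Weyl-symmetrizing $K$. Next I would recall from \cite{Sawyer1} that in the type-$A$ setting the dual Abel transform has a density: $(\mathcal{A}^A)^*(f)(Z) = \int_{C^A(a^A(Z))}\,f(H)\,K^A(H,a^A(Z))\,dH$, where $K^A(H,Y)>0$ for $H$ in the relative interior $C^A(Y)^\circ$. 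Substituting this into \eqref{Abel} gives
\begin{align*}
\mathcal{A}^*(f)(X) = \int_{B_q}\left(\int_{C^A(X(w))} f(H)\,K^A(H,X(w))\,dH\right)|\det Z(X,w)|^{-d(p+1)/2+1}\,dm_p(w)\,dw,
\end{align*}
so the issue is to interchange the order of integration and absorb the inner $dH$-integral against a measure on $C_q$.

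The main obstacle is that for a fixed $w$ the set $C^A(X(w))$ is a polytope of dimension $q-1$ — it lies in the affine hyperplane $\{\tr H = \tr X(w)\}$ — so the inner integral is against an $(q-1)$-dimensional measure, whereas the target density $K(\cdot,X)$ must be with respect to $q$-dimensional Lebesgue measure on $C_q$. The resolution is the fibration $w\mapsto f_X(w)=\tr X(w)$: by Lemma \ref{BqX}, on the dense open set $B_q(X)$ the differential $df_X$ is surjective, so near a point of $B_q(X)$ we may slice $B_q$ into level sets $\{f_X = c\}$, each of which sweeps out (as $w$ ranges over that slice and $H$ ranges over $C^A(X(w))$) a full-dimensional piece of $C_q$. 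I would therefore use the coarea formula for $f_X$ to write $dw$ on $B_q(X)$ as (slice measure) $\times\, dc/|\nabla f_X|$, combine the ``$c$'' direction with the $(q-1)$-dimensional $dH$ integration to recover $q$-dimensional $dH$ on $C_q$, and let $K(H,X)$ be the resulting (a.e.-defined, measurable, non-negative) density. Since $B_q\setminus B_q(X)$ has measure zero and contributes nothing (its image is lower-dimensional), this produces the desired formula $\mathcal{A}^*(f)(X)=\int_{C(X)} f(H)\,K(H,X)\,dH$; restricting to $C(X)^\circ$ changes nothing because $\partial C(X)$ has measure zero.

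Finally, for strict positivity on $C(X)^\circ$: given $H_0\in C(X)^\circ$, by Weyl-invariance I may assume $H_0\in C(X)^\circ\cap\overline{C_q^+}$, and then by Proposition \ref{CC} there is some $w_0\in B_q(X)$ with $H_0\in C^A(X(w_0))^\circ$. Because $K^A(\cdot,X(w_0))>0$ on $C^A(X(w_0))^\circ$, and because $w\mapsto X(w)$ is continuous and $w_0$ has a whole neighborhood inside $B_q(X)$ on which the fibration is nondegenerate, the integrand defining $K(H_0,X)$ is strictly positive on a set of positive measure in the fiber over $H_0$; hence $K(H_0,X)>0$. I expect the delicate point to be the measure-theoretic bookkeeping in the coarea step — verifying that the slice-and-recombine procedure genuinely yields a well-defined measurable density independent of the local slicing, and that the null sets ($\partial C(X)$, $B_q\setminus B_q(X)$, and the locus where $X(w)$ is a multiple of the identity) can all be discarded without affecting the identity. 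Once $K$ is constructed and shown positive on $C(X)^\circ$, the statement about the support of $H\mapsto K(H,X)$ is immediate from $\overline{C(X)^\circ}=C(X)$, which holds since $X\neq0$ forces $\dim C(X)=q$.
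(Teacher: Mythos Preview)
Your proposal is correct and follows essentially the same route as the paper: substitute the type-$A$ density $K^A$ into \eqref{Abel}, restrict to the dense open set $B_q(X)$, swap the order of integration (the paper writes the result as an integral over the level set $D_H(X)=\{w\in B_q(X):H\in C^A(X(w))^\circ\}$ rather than invoking the coarea formula by name, but it is the same mechanism), and then use Proposition \ref{CC} together with $K^A>0$ on $C^A(X(w))^\circ$ for strict positivity. The only omission is that your argument tacitly assumes $q\geq 2$; the paper handles $q=1$ separately, since in that case $C^A(Y)$ is a single point, there is no density $K^A$, and $K(\cdot,X)$ comes directly from the one-variable change of variable $w\mapsto\log|\cosh x_1+\sinh x_1\,w|$.
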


\begin{proof}
If $q=1$, then the result follows from the beginning of the proof of Theorem \ref{main}.  We may therefore assume that $q\geq2$.

Let $K^A$ be the kernel of the Abel transform for the root systems of type $A$ (this was generalized to arbitrary multiplicities in 
\cite[Theorem 2.3]{Sawyer1} but here we are only using the group case).  Recall that provided $X\not\in\R\,I_q$,
\begin{align*}
(\mathcal{A}^A)^*(f)(X)
&=\int_{C^A(X)^\circ}\,f(H)\,K^A(H,X)\,dH
\end{align*}
with $K^A(\cdot,X)>0$ on $C^A(X)^\circ$.  Let $X(w)=a^A(Z(X,w))$ as before and assume $q\geq 2$.  We have
\begin{align*}
\mathcal{A}^*(f)(X)
&=\int_{B_q}\,(\mathcal{A}^A)^*(f)(Z(X,w))\,|\det(Z(X,w))|^{-d\,(p+1)/2+1}\,dm_p(w)\,dw\\
&=\int_{B_q(X)}\,(\mathcal{A}^A)^*(f)(Z(X,w))\,|\det(Z(X,w))|^{-d\,(p+1)/2+1}\,dm_p(w)\,dw\\\\
&=\int_{B_q(X)}\,\int_{C^A(X(w))^\circ}\,f(e^H)\,K^A(H,X(w))\,dH\,\det(e^{X(w)})^{-d\,(p+1)/2+1}\,dm_p(w)\,dw\\
&=\int_{\cup_{w\in B_q(X)}\,C^A(X(w))^\circ}\,f(e^H)\,\left[\int_{D_H(X)}\,K^A(H,X(w))\,\det(e^{X(w)})^{-d\,(p+1)/2+1}\,dm_p(w)\,dw\right]\,dH
\end{align*}
where $D_H(X)=\{w\in B_q(X)\colon H\in C^A(X(w))^\circ\}$. Now,
\begin{align}
K(H,X)&=\int_{D_H(X)}\,K^A(H,X(w))\,\det(e^{X(w)})^{-d\,(p+1)/2+1}\,dm_p(w)\,dw\nonumber\\
&=\frac{1}{\kappa^{p\,d/2}}\,\det(e^{H})^{-d\,(p+1)/2+1}\label{DHX}
\\&\qquad\qquad
\,\cdot\int_{D_H(X)}\,K^A(H,X(w))\,\det(I-w^*\,w)^{p\,d/2-d\,(q-1/2)-1}\,dw.\nonumber
\end{align}

Suppose now that $H\in C(X)^\circ\cap \overline{C_q^+}$ and consider the open set $U(X,H)=\{w\in B_q(X)\colon \sum_{i=1}^r\,x_k(w)>\sum_{k=1}^r\, h_k,~1\leq r\leq q-1\}$.  The set $D_H(X)$ is the nonempty intersection of $U(X,H)$ and the submanifold $\{w\in B_q(X)\colon \tr X(w)=\tr H\}$ (refer to Proposition \ref{CC}).  For every $w_0\in D_H(H)$, there exists a coordinate system $(U,\phi)$ where $U$ is an open subset of $U(X,H)$ containing $w_0$, $\phi\colon U\to\R^{d\,q^2}$ and $\phi(w)=(x_1,\dots,x_{d\,q^2-1},0)$ on $U\cap D_H(X)$.  This allows us to integrate over $D_H(X)$.  Therefore, the map $H\to K(H,X)$ is strictly positive on $C(X)^\circ\cap \overline{C_q^+}$ and by the Weyl invariance of the map, over all of $C(X)^\circ$.
\end{proof}

\begin{corollary}\label{LaplaceType}
For $X\not=0$, we have
\begin{align*}
\phi_\lambda(X)
&=\int_{C(X)^\circ}\,f(H)\,K(H,X)\,dX.
\end{align*}
\end{corollary}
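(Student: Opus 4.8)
The plan is to derive Corollary \ref{LaplaceType} from Theorem \ref{KHX} by taking as test function the exponential $f(H)=e^{i\langle\lambda,H\rangle}$, after first identifying $\phi_\lambda$ with the dual Abel transform of such an exponential. First I would record the type-$A$ Laplace-type representation $\phi^A_\lambda=(\mathcal{A}^A)^*\bigl(e^{i\langle\lambda,\cdot\rangle}\bigr)$, which is the rank-$q$ type-$A$ instance of \eqref{LT} and is available from \cite{Sawyer1,Sawyer3}. Substituting $f=e^{i\langle\lambda,\cdot\rangle}$ into the formula \eqref{Abel} for $\mathcal{A}^*$ then reproduces term by term the right-hand side of \eqref{phi} in Theorem \ref{phiBC}; hence $\phi_\lambda=\mathcal{A}^*\bigl(e^{i\langle\lambda,\cdot\rangle}\bigr)$ as functions on $C_q$. (Alternatively, one can argue from the last rows of Figure \ref{Dunkl}: the function $\mathcal{A}^*\bigl(e^{i\langle\lambda,\cdot\rangle}\bigr)$ solves the same system $p(D_\xi)\,u=p(\langle\xi,\lambda\rangle)\,u$ and takes the value $1$ at $0$, so it coincides with $J(\lambda,\cdot)=\phi_\lambda$ by the uniqueness clause.)

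Next I would apply Theorem \ref{KHX} to the fixed continuous function $f=e^{i\langle\lambda,\cdot\rangle}$ and to an arbitrary $X\in C_q\setminus\{0\}$, obtaining
\begin{align*}
\mathcal{A}^*\bigl(e^{i\langle\lambda,\cdot\rangle}\bigr)(X)=\int_{C(X)^\circ}\,e^{i\langle\lambda,H\rangle}\,K(H,X)\,dH,
\end{align*}
with $K(\cdot,X)$ measurable, non-negative, strictly positive on $C(X)^\circ$, and supported on $C(X)$. Combining this with the identification of the previous paragraph gives
\begin{align*}
\phi_\lambda(X)=\int_{C(X)^\circ}\,e^{i\langle\lambda,H\rangle}\,K(H,X)\,dH\qquad(X\neq 0),
\end{align*}
i.e. the Laplace-type expression for the generalized spherical function announced in the title of this subsection, with exactly the kernel $K$ constructed in Theorem \ref{KHX}.

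I do not expect any real obstacle: Corollary \ref{LaplaceType} is essentially a specialization of Theorem \ref{KHX} to one test function, once $\phi_\lambda$ is known to be $\mathcal{A}^*$ of an exponential. The only point worth a line of care is that the equality $\phi^A_\lambda=(\mathcal{A}^A)^*(e^{i\langle\lambda,\cdot\rangle})$, and hence $\phi_\lambda=\mathcal{A}^*(e^{i\langle\lambda,\cdot\rangle})$, must be read correctly at the arguments $Z(X,w)$ whose singular values coincide (so that $a^A(Z(X,w))\in\R\,I_q$ and the type-$A$ kernel degenerates to a point mass); since both sides are continuous in their argument and agree on the dense set of regular points (Corollary \ref{eigen}, Lemma \ref{BqX}), and since such $w$ form a $dw$-null subset of $B_q$, this affects neither the identity nor the integral.
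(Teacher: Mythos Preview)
Your proposal is correct and is exactly the argument the paper intends: the corollary is stated without proof because the identity $\phi_\lambda=\mathcal{A}^*(e^{i\langle\lambda,\cdot\rangle})$ is already recorded in the Introduction (just after \eqref{Dual}), so the result is an immediate specialization of Theorem~\ref{KHX} to $f(H)=e^{i\langle\lambda,H\rangle}$. Your care about degenerate $w$ is fine but not needed for the corollary itself, since the identification $\phi_\lambda=\mathcal{A}^*(e^{i\langle\lambda,\cdot\rangle})$ is already established globally via \eqref{phi} and \eqref{Abel}.
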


\begin{remark}
Note that the formulas in \eqref{DHX} remain valid with some adjustment when $q=1$; when in addition $d=1$, the set $D_H(X)$ only contains one point and the integral over that set disappears.

Note also that the integral term in the last expression for $K(H,X)$ (second line of \eqref{DHX}) in the proof of the theorem is decreasing with $p$ and corresponds to a group case when $p\geq 2\,q$ is an integer.  Since in the geometric setting, the support is known to be $C(X)$, we could also have deduced the same result from that observation in the more general case.

Note also that the function $K(H,X)$ in the theorem is still defined when $\Re p>2\,q-1$ (\emph{i.e.} when $p$ is not assumed to be real) but $K$ is no longer real and proving that its support is exactly $C(X)$ is another matter.
\end{remark}

\section{The rational Dunkl setting}\label{DunklSetting}

In this section, we will derive results which provide the counterparts of formulas \eqref{phi} and \eqref{Abel} as well as of the results of Theorem \ref{main}, Theorem \ref{KHX} and its corollary in the rational Dunkl setting.

In \cite{Sawyer1}, we use the following result (originally from \cite{Sawyer3}) about the spherical functions associated to the root systems of type $A$ in the trigonometric Dunkl setting:
\begin{theorem}\label{old}
For $X\in (\a^A)^+$, we define $\phi_\lambda^A(X)=e^{i\,\lambda(X)}$ when $q=1$ and for 
$q\geq 2$,
\begin{align*}
\phi^A_\lambda(X)
=\frac{\Gamma(d\,q/2)}{(\Gamma(d/2))^q}
e^{i\,\lambda_q\,\sum_{k=1}^q\,x_k}
\int_{E(X)}\, \phi^A_{\lambda_0}(e^\xi)\,S^{(d)}(\xi,X)\,d(\xi)^d\,d\xi
\end{align*}
where $E(X)
=\{\xi=(\xi_1,\dots,\xi_{p-1})\colon x_{k+1}\leq \xi_k\leq x_k\}$, $\lambda(X)=\sum_{j=1}^q\,\lambda_j\,x_j$,
$\lambda_0(\xi)=\sum_{i=1}^{q-1}\,(\lambda_i-\lambda_q)\,\xi_i$, $d(X)=\prod_{r<s}\,\sinh(x_r-x_s)$, $d(\xi)=\prod_{r<s}\,\sinh(\xi_r-\xi_s)$ and
\begin{align*}
S^{(d)}(\xi,X)=d(X)^{1-d}\,d(\xi)^{1-d}\,\left[\prod_{r=1}^{q-1}
\,\left(\prod_{s=1}^r\,\sinh(x_s-\xi_r)
\,\prod_{s=r+1}^q\,\sinh(\xi_r-x_s)\right)\right]^{d/2-1}
\end{align*}
\end{theorem}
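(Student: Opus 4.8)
This identity is recalled from \cite{Sawyer3} (and is used in \cite{Sawyer1}), so the quickest route is simply to cite those sources. If a self-contained derivation is wanted, the plan is to argue by induction on $q$, exploiting the geometry of the symmetric space $\GL_0(q,\F)/\U(q,\F)$ together with the homogeneous fibration $\U(q,\F)/\U(q-1,\F)\cong S^{d\,q-1}$, the unit sphere of $\F^q$ regarded as a real vector space. The base case $q=1$ is the tautology $\phi^A_\lambda(X)=e^{i\,\lambda(X)}$.

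The first step is to start from the integral formula $\phi^A_\lambda(e^X)=\int_{\U(q,\F)}\,e^{(i\,\lambda-\rho^A)(H^A(e^X\,k))}\,dk$, where $H^A$ is the Iwasawa $\a^A$-projection, and to peel off the determinant character of $\GL_0(q,\F)$: since $\tr H^A(e^X\,k)=\sum_{j=1}^q\,x_j$ for every $k$, writing $\lambda=\lambda_q\,(f_1+\dots+f_q)+\lambda_0$ with $\lambda_0$ vanishing on the scalars pulls out the prefactor $e^{i\,\lambda_q\,\sum_{k=1}^q\,x_k}$. Next I would integrate along the fibres of $\U(q,\F)\to S^{d\,q-1}$: a $k$ is determined modulo $\U(q-1,\F)$ by the image $v\in S^{d\,q-1}$ of the last basis vector, and modulo this action the projection $H^A(e^X\,k)$ depends only on the way $e^X$ distorts the flag attached to $v$. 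Performing the residual $\U(q-1,\F)$-integration recognizes the inner integrand as $\phi^A_{\lambda_0}(e^\xi)$ with $\lambda_0(\xi)=\sum_{i=1}^{q-1}(\lambda_i-\lambda_q)\,\xi_i$, where $\xi=(\xi_1,\dots,\xi_{q-1})$ records the first $q-1$ singular exponents of $e^X\,k$; Cauchy interlacing for bordered Hermitian matrices (equivalently, $\GL_{q-1}\subset\GL_q$ branching) forces $x_{k+1}\le\xi_k\le x_k$, that is, $\xi\in E(X)$. It then remains to rewrite the surface measure on $S^{d\,q-1}$ in the coordinates $\xi$ and to absorb the $\rho^A$-exponential into the measure.

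The main obstacle is this last bookkeeping step: one must express the singular values of $e^X\,k$ explicitly in terms of $v$ (a rank-one perturbation computation), differentiate to obtain the Jacobian, and then check that the resulting $\sinh$-factors regroup exactly into the products defining $S^{(d)}(\xi,X)$, that the Weyl denominators $d(X)^{1-d}\,d(\xi)^{1-d}$ and the exponent $d/2-1$ come out as stated, and that the beta-type normalizations combine to $\Gamma(d\,q/2)/(\Gamma(d/2))^q$. For $\F=\R,\C,\H$ this is a finite, if lengthy, matrix calculation, and once it is carried out the induction closes. A way to sidestep the Jacobian is to observe that the right-hand side is, by construction, a Weyl-invariant solution of the full commuting system of $\GL_q$-invariant radial differential operators, with the same eigenvalues and normalization as $\phi^A_\lambda$; by the Harish-Chandra--Opdam uniqueness theorem the two then coincide, so that the real content of the statement becomes the verification that the right-hand side is well defined and has this eigenfunction property, which one checks by differentiating under the integral sign.
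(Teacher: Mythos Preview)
Your proposal is correct and matches the paper's treatment: the paper does not prove this statement at all but simply recalls it as a known result from \cite{Sawyer3} (via \cite{Sawyer1}), exactly as you identify in your first sentence. The inductive sketch you add---peeling off the determinant character, fibering $\U(q,\F)$ over $S^{d\,q-1}$, recognizing the inner integral as $\phi^A_{\lambda_0}$ on $\GL_{q-1}$, and tracking the Jacobian and normalizing constants---is indeed the strategy of the original reference, so there is nothing further to compare.
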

\noindent and prove the following in the rational Dunkl setting:
\begin{theorem}\label{psidunkl}
Let $X\in(\a^A)^+$.  The generalized spherical function associated to the root system $A_{q-1}$ in the rational Dunkl setting is given by 
$\psi_\lambda^A(X)=e^{i\,\lambda(X)}$ when $q=1$ and for 
$q\geq 2$,
\begin{align*}
\psi_\lambda^A(X)
&=\frac{\Gamma(d\,q/2)}{\Gamma(d/2)^q}
\,e^{i\,\lambda_q\,\sum_{k=1}^r\,x_k}
\,\int_{E(X)}\,\psi^A_{\lambda_0}(e^\xi)\,T^{(d)}(\eta,X)\,d_0(\eta)^d\,d\eta
\end{align*}
where $X\in (\a^A)^+$, $E(X)$, and $\lambda_0$ are as before,  $d_0(\xi)=\prod_{r<s}\,(\xi_r-\xi_s)$, $d_0(X)=\prod_{r<s}\,(x_r-x_s)$ and
\begin{align*}
T^{(d)}(\xi,X)&=
d_0(X)^{1-d} \,d_0(\xi)^{1-d}
\prod_{r=1}^q\,\left[
\prod_{s=1}^{r-1}\,(x_r-\xi_s) \,\prod_{s=r}^{q-1}\,(\xi_s-x_r)
\right]^{d/2-1}.
\end{align*}
\end{theorem}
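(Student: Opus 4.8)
My plan is to deduce Theorem~\ref{psidunkl} from Theorem~\ref{old} by the standard rational degeneration of the trigonometric Dunkl theory: one contracts the metric while blowing up the spectral parameter at the compensating rate. The key external input is the classical contraction limit
\begin{align*}
\psi^A_\lambda(e^Y)=\lim_{\epsilon\to0^+}\phi^A_{\lambda/\epsilon}(e^{\epsilon Y}),
\end{align*}
valid for the $W^A$-invariant $J$-functions of the root systems $A_{n}$ in all dimensions, with locally uniform convergence (see e.g.\ \cite{Anker,Opdam1,Roesler3}). The case $q=1$ of the theorem is trivial, since $\phi^A_\lambda(e^X)=\psi^A_\lambda(e^X)=e^{i\,\lambda(X)}$ there. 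So I would fix $q\geq2$ and $X\in(\a^A)^+$, replace $\lambda$ by $\lambda/\epsilon$ in the formula of Theorem~\ref{old}, evaluate it at $\epsilon\,X$, and substitute $\xi=\epsilon\,\eta$ in the inner integral; as $E(\epsilon X)=\epsilon\,E(X)$, the new variable $\eta$ ranges over $E(X)$ and $d\xi=\epsilon^{q-1}\,d\eta$.

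Three elementary facts then organize the passage to the limit. First, the outer exponential is $\epsilon$-free, $e^{i\,(\lambda_q/\epsilon)\sum_k\epsilon\,x_k}=e^{i\,\lambda_q\sum_k x_k}$, and already matches the prefactor of Theorem~\ref{psidunkl}. Second, the inner factor $\phi^A_{\lambda_0/\epsilon}(e^{\epsilon\eta})$ tends to $\psi^A_{\lambda_0}(e^\eta)$ by the contraction limit applied to the $(q-1)$-variable functions, uniformly for $\eta\in\overline{E(X)}$. Third, using $\sinh(\epsilon\,t)=\epsilon\,t+O(\epsilon^3)$, one checks that, uniformly on $\overline{E(X)}$,
\begin{align*}
S^{(d)}(\epsilon\,\eta,\epsilon\,X)\,d(\epsilon\,\eta)^d\,\epsilon^{q-1}\longrightarrow T^{(d)}(\eta,X)\,d_0(\eta)^d.
\end{align*}
Here $d_0$ replaces $d$ everywhere because $\sinh(\epsilon\,t)/(\epsilon\,t)\to1$; the limiting bracket of $S^{(d)}$ coincides with the bracket of $T^{(d)}$, since the two products run over the same $q(q-1)$ pairs $(x_r,\xi_s)$, each factor of one being $-1$ times the corresponding factor of the other and $q(q-1)$ being even; and the exponents of $\epsilon$ contributed by $d(\epsilon X)^{1-d}$, $d(\epsilon\eta)^{1-d}$, the bracket raised to $\tfrac d2-1$, $d(\epsilon\eta)^d$ and the Jacobian $\epsilon^{q-1}$ sum to $(1-d)\binom q2+(1-d)\binom{q-1}2+q(q-1)(\tfrac d2-1)+d\binom{q-1}2+(q-1)=0$, a one-line check. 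Granting that the limit $\epsilon\to0^+$ commutes with $\int_{E(X)}$, the left side converges to $\psi^A_\lambda(e^X)$ by the contraction limit for the $q$-variable functions, the right side converges to the asserted expression, and the theorem follows.

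The step I expect to be the main obstacle is the justification of that interchange, by dominated convergence over the compact set $\overline{E(X)}$. Two uniform estimates are needed. (a) The family $\eta\mapsto\phi^A_{\lambda_0/\epsilon}(e^{\epsilon\eta})$ is bounded on $\overline{E(X)}$ uniformly for small $\epsilon$; this follows at once from the locally uniform convergence quoted above. (b) The rescaled kernels $S^{(d)}(\epsilon\eta,\epsilon X)\,d(\epsilon\eta)^d\,\epsilon^{q-1}$ have a single integrable majorant on $E(X)$: for $\epsilon$ in a bounded interval and $\eta\in\overline{E(X)}$, each ratio $\sinh(\epsilon\,t)/(\epsilon\,t)$ occurring above stays between two positive constants, so this family is uniformly comparable to $T^{(d)}(\eta,X)\,d_0(\eta)^d$; after the cancellation of $d_0(\eta)^{1-d}$ against $d_0(\eta)^d$, the latter is bounded except near the faces of $E(X)$ on which some $\xi_s$ meets some $x_r$, where it carries the exponent $\tfrac d2-1>-1$ and is therefore integrable — precisely the integrability already underlying Theorems~\ref{old} and~\ref{psidunkl} and used for the type-$A$ Abel kernel in \cite{Sawyer1,Sawyer3}. (For $d\geq2$ the kernel is continuous on $\overline{E(X)}$ and (b) is immediate.) With (a) and (b) in hand, dominated convergence applies and the argument is complete.
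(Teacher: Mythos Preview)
Your argument is correct, and it is essentially the standard one. Note, however, that the present paper does not actually prove Theorem~\ref{psidunkl}: the result is quoted from \cite{Sawyer1}, so there is no in-paper proof to compare against directly. That said, your rational-limit strategy --- replace $\lambda$ by $\lambda/\epsilon$, evaluate at $\epsilon X$, rescale the integration variable, and invoke the de Jeu contraction $\psi^A_\lambda=\lim_{\epsilon\to0}\phi^A_{\lambda/\epsilon}(e^{\epsilon\,\cdot})$ --- is exactly the method the paper deploys for the $BC$ analogue, Theorem~\ref{psiBC} (where the paper cites \cite[Theorem~4.13]{DeJeu} rather than \cite{Anker,Opdam1,Roesler3} for the contraction limit). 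Your power-counting and your observation that the two bracket products in $S^{(d)}$ and $T^{(d)}$ differ factorwise by a sign, with an even number $q(q-1)$ of factors, are both correct; the dominated-convergence justification via the uniform bound $\sinh(\epsilon t)/(\epsilon t)\asymp 1$ on the compact $\overline{E(X)}$ is also sound. In short: right proof, same approach as the author's, and in fact somewhat simpler than the paper's $BC$ argument because here the inner domain $E(\epsilon X)=\epsilon\,E(X)$ scales cleanly, whereas in Theorem~\ref{psiBC} the dependence of $X^\epsilon(w)$ on $\epsilon$ forces the extra work of Lemma~\ref{He} and the change of variables of Remark~\ref{intT}.
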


\begin{remark}\label{intT}
The results given in \cite{Sawyer1} are actually extended to $X\in\overline{(\a^A)^+}$.
Note also that if $\sigma=\{(\beta_1,\dots,\beta_q)\colon \beta_i\geq0,\sum_{i=1}^q\,\beta_i=1\}$ then 
\begin{align*}
\frac{\Gamma(d\,q/2)}{\Gamma(d/2)^q}
\,\int_{E(X)}\,T(\eta,X)\,d_0(\eta)^d\,d\eta=\frac{\Gamma(d\,q/2)}{\Gamma(d/2)^q}\,\int_\sigma (\beta_1\cdots\beta_q)^{d/2-1}\,d\beta=1.
\end{align*}

This follows directly by making the change of variable $\beta_k=\frac{\prod_{i=1}^{q-1}\,(\xi_i-x_k)}{\prod_{i\not=k}\,(x_i-x_k)}$,
$k=1$, \dots, $q$.
\end{remark}

The next result describes the behaviour of $a^A(Z(\epsilon\,X,w))$ when $\epsilon$ tends to 0; a step necessary to apply the technique of ``rational limits'' by de Jeu (\cite[Theorem 4.13]{DeJeu}) later on.

\begin{lemma}\label{He}
Let $X\in C_q$.
For $w\in \overline{B_q}$, write $Z(\epsilon\,X,w)=k_1(\epsilon)\,e^{X^\epsilon(w)}\,k_2(\epsilon)$ with $X^\epsilon(w)\in\overline{(\a^A)^+}$
and $k_i(\epsilon)\in\U(q,\F)$. Then
\begin{align}
\lim_{\epsilon\to0}\,\frac{X^\epsilon(w)}{\epsilon}=a^A\left(\exp\left(\frac{X\,w+w^*\,X}{2}\right)\right)\label{limit}
\end{align}
uniformly on $\overline{B_q}$. 
\end{lemma}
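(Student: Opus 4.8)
The statement to be proved is Lemma~\ref{He}: that $X^\epsilon(w)/\epsilon$ converges to $a^A\!\left(\exp\!\left(\tfrac{Xw+w^*X}{2}\right)\right)$ uniformly on $\overline{B_q}$ as $\epsilon\to0$. My starting point would be the observation that $X^\epsilon(w)$ consists of half the logarithms, in decreasing order, of the eigenvalues of the Hermitian matrix $Z_2(\epsilon X,w)=Z(\epsilon X,w)^*Z(\epsilon X,w)$, where $Z(\epsilon X,w)=\cosh(\epsilon X)+\sinh(\epsilon X)\,w$. So first I would expand $Z(\epsilon X,w)$ in $\epsilon$: since $\cosh(\epsilon X)=I+O(\epsilon^2)$ and $\sinh(\epsilon X)=\epsilon X+O(\epsilon^3)$ (with the error terms uniform in $w$ because $X$ is fixed and $w$ ranges over the bounded set $\overline{B_q}$), we get $Z(\epsilon X,w)=I+\epsilon\,Xw+O(\epsilon^2)$. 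Hence
\begin{align*}
Z_2(\epsilon X,w)=I+\epsilon\,(Xw+w^*X)+O(\epsilon^2)
\end{align*}
uniformly on $\overline{B_q}$.

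Next I would pass to eigenvalues. Writing $S=Xw+w^*X$ (a Hermitian matrix depending continuously on $w$), the eigenvalues of $Z_2(\epsilon X,w)$, listed in decreasing order, are $1+\epsilon\,\mu_j(w)+O(\epsilon^2)$ where $\mu_1(w)\geq\cdots\geq\mu_q(w)$ are the ordered eigenvalues of $S$; here one uses a quantitative perturbation bound for eigenvalues of Hermitian matrices (e.g. Weyl's inequality), which gives the $O(\epsilon^2)$ control uniformly in $w$ since the $O(\epsilon^2)$ term in $Z_2$ is uniformly bounded in operator norm on the compact set $\overline{B_q}$. Taking logarithms and dividing by $2\epsilon$, the $j$-th entry of $X^\epsilon(w)/\epsilon$ equals $\tfrac12\log(1+\epsilon\mu_j(w)+O(\epsilon^2))/\epsilon=\tfrac12\mu_j(w)+O(\epsilon)$, again uniformly in $w$. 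On the other hand $a^A\!\left(\exp\!\left(\tfrac{S}{2}\right)\right)$ is by definition the diagonal matrix with entries the ordered singular-value logarithms of $\exp(S/2)$; since $S$ is Hermitian, $\exp(S/2)$ is positive definite Hermitian with eigenvalues $e^{\mu_j(w)/2}$, so its singular-value logarithms are exactly $\tfrac12\mu_j(w)$ in decreasing order. This matches the limit computed above.

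The one genuine subtlety — and the step I expect to be the main obstacle — is making the uniformity rigorous at points $w$ where $S=S(w)$ has repeated eigenvalues: there the individual eigenvalue functions $\mu_j(w)$ are only continuous, not smooth, and first-order eigenvalue perturbation theory for a single eigenvalue breaks down. The clean way around this is to avoid tracking eigenvalues individually and instead invoke Weyl's perturbation inequality in the form $|\lambda_j(A)-\lambda_j(B)|\leq\|A-B\|$ for Hermitian $A,B$ (ordered eigenvalues), which requires no spectral-gap hypothesis: applying it to $A=Z_2(\epsilon X,w)$ and $B=I+\epsilon S(w)$ gives $|\lambda_j(Z_2(\epsilon X,w))-(1+\epsilon\mu_j(w))|\leq\|Z_2(\epsilon X,w)-I-\epsilon S(w)\|=O(\epsilon^2)$ with the implied constant uniform over $\overline{B_q}$. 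Everything downstream (the logarithm, the factor $1/\epsilon$) is then an elementary uniform estimate. I would also remark that $w\mapsto a^A(\exp(S(w)/2))$ is continuous on the compact set $\overline{B_q}$, so the limit function is well-behaved, and that the ordering conventions on $X^\epsilon(w)$ and on $a^A$ agree by construction, so no reordering issue arises. With these pieces in place the uniform convergence \eqref{limit} follows.
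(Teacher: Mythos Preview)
Your argument is correct and complete; the use of Weyl's inequality $|\lambda_j(A)-\lambda_j(B)|\leq\|A-B\|$ is exactly the right device to make the eigenvalue perturbation uniform over $\overline{B_q}$ without any spectral-gap assumption, and the subsequent scalar logarithm and division by $2\epsilon$ are routine. The paper proceeds somewhat differently: rather than comparing eigenvalues of $Z_2(\epsilon X,w)$ and $I+\epsilon S(w)$ directly, it first takes the \emph{matrix} logarithm $\log Z_2(\epsilon X,w)$ (analytic on positive definite matrices), observes that the characteristic polynomial of $X^\epsilon(w)/\epsilon$ coincides with that of $\tfrac{1}{2\epsilon}\log Z_2(\epsilon X,w)$, and then uses the Taylor expansion $\log Z_2(\epsilon X,w)=\epsilon\,(Xw+w^*X)+O(\epsilon^2)$ to conclude that this characteristic polynomial converges uniformly to that of $\tfrac12(Xw+w^*X)$; the eigenvalue convergence is then inferred from convergence of characteristic polynomials. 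Your route is arguably more elementary and more transparent about the uniformity, since Weyl's inequality gives the quantitative bound in one stroke, whereas the implication ``uniform convergence of characteristic polynomials $\Rightarrow$ uniform convergence of ordered roots'' is true but left implicit in the paper. Conversely, the paper's approach has the mild aesthetic advantage of never needing to name Weyl's inequality and of packaging the whole computation as a single matrix identity. Either way, both arguments rest on the same first-order expansion $Z_2(\epsilon X,w)=I+\epsilon(Xw+w^*X)+O(\epsilon^2)$ and are equivalent in strength.
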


\begin{proof}
Note that the logarithm function is analytic when defined on the space of positive definite matrices with values in the space of symmetric matrices.
We have $Z_2(\epsilon\,X,w)=k_1(\epsilon)\,e^{2\,X^\epsilon(w)}\,k_1(\epsilon)^*$; observe also that $X^\epsilon(w)$ is continuous in $w$ and $\epsilon$.
Assuming that $|\epsilon|<1$ and $w\in\overline{B_q}$,
\begin{align*}
\det(t\,I-\frac{X^\epsilon(w)}{\epsilon})
&=\det(t\,I-\frac{k_1(\epsilon)\,2\,X^\epsilon(w)\,k_1^*(\epsilon)}{2\,\epsilon})
=\det(t\,I-\frac{\log(k_1(\epsilon)\,e^{2\,X^\epsilon(w)}\,k_1^*(\epsilon))}{2\,\epsilon})\\
&=\det(t\,I-\frac{\log(Z_2(\epsilon\,X,w))}{2\,\epsilon}).
\end{align*}

As $\epsilon$ tends to 0, the last term converges uniformly to $\det(t-(X\,w+w^*\,X)/2)$ since,
provided $|\epsilon|<<1$, 
\begin{align*}
\log(Z_2(\epsilon\,X,w))&=\log(I+[Z_2(\epsilon\,X,w)-I])
=\sum_{k=1}^\infty\,(-1)^{k+1}\,\frac{[Z_2(\epsilon\,X,w)-I]^k}{k}\\
&=\epsilon\,(X\,w+w^*\,X)/2+O(\epsilon^2)
\end{align*}
which follows from the Taylor expansion of $Z_2(\epsilon\,X,w)=I+\epsilon\,(X\,w+w^*\,X)/2+O(\epsilon^2)$. 

Since the characteristic polynomial of $X^\epsilon(w)/\epsilon$ tends uniformly to the characteristic polynomial of $(X\,w+w^*\,X)/2$ as $\epsilon\to0$, this proves
\eqref{limit}.
\end{proof}

\begin{corollary}
There exists $\delta>0$ and a compact set $\tilde{E}(X)$ independent of $w\in\overline{B_q}$ and $\epsilon$ such that $E(a^A\left(\exp\left(\frac{X\,w+w^*\,X}{2}\right)\right))
\subseteq\tilde{E}(X)$ and $E(X^\epsilon(w)/\epsilon)
\subseteq\tilde{E}(X)$ whenever $0<\epsilon<\delta$.
\end{corollary}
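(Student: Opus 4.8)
The plan is to reduce the statement to a uniform boundedness argument. First I would record the elementary fact that for any $Y=\diag[y_1,\dots,y_q]\in\overline{(\a^A)^+}$ one has $E(Y)\subseteq[-M_Y,M_Y]^{q-1}$, where $M_Y=\max_i|y_i|$: indeed $E(Y)=\{\xi\colon y_{k+1}\le\xi_k\le y_k,\ 1\le k\le q-1\}$, and since $y_1\ge y_2\ge\dots\ge y_q$ each coordinate of $\xi$ lies between $y_q$ and $y_1$, hence in $[-M_Y,M_Y]$. Consequently it is enough to bound $M_Y$ uniformly for $Y=a^A\left(\exp\left(\frac{X\,w+w^*\,X}{2}\right)\right)$ and for $Y=X^\epsilon(w)/\epsilon$, over $w\in\overline{B_q}$ and small $\epsilon>0$; the cube $\tilde E(X)=[-M,M]^{q-1}$ with $M$ the resulting bound is then compact and contains both families.

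For the limit matrix, the map $w\mapsto\frac{X\,w+w^*\,X}{2}$ is continuous, and so are the matrix exponential and the map $a^A$ on positive definite matrices, so $w\mapsto a^A\left(\exp\left(\frac{X\,w+w^*\,X}{2}\right)\right)$ is continuous on the compact set $\overline{B_q}$ and hence bounded there, say $M_Y\le M_0$. If one wants an explicit value: since $X$ is a real diagonal matrix, $\frac{X\,w+w^*\,X}{2}$ is Hermitian, so its exponential is positive definite with eigenvalues $e^\mu$ as $\mu$ runs over the eigenvalues of $\frac{X\,w+w^*\,X}{2}$; thus $M_Y=\bigl\|\frac{X\,w+w^*\,X}{2}\bigr\|_{\mathrm{op}}\le\|X\|_{\mathrm{op}}$, using $\|w\|_{\mathrm{op}}\le1$ for $w\in\overline{B_q}$, so one may take $M_0=\max_i|x_i|$.

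For $Y=X^\epsilon(w)/\epsilon$, Lemma \ref{He} provides the uniform convergence $X^\epsilon(w)/\epsilon\to a^A\left(\exp\left(\frac{X\,w+w^*\,X}{2}\right)\right)$ on $\overline{B_q}$. Hence there is $\delta>0$ such that the diagonal entries of $X^\epsilon(w)/\epsilon$ differ from those of the limit by less than $1$ for every $w\in\overline{B_q}$ and $0<\epsilon<\delta$; combined with the previous paragraph this gives $M_{X^\epsilon(w)/\epsilon}<M_0+1=:M$ for all such $w$ and $\epsilon$, so $E(X^\epsilon(w)/\epsilon)\subseteq[-M,M]^{q-1}$. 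Setting $\tilde E(X)=[-M,M]^{q-1}$ then covers both families simultaneously. I do not anticipate any real obstacle here; the only steps requiring a little care are the inclusion $E(Y)\subseteq[-M_Y,M_Y]^{q-1}$ and the extraction of a genuinely uniform bound out of the uniform convergence in Lemma \ref{He}.
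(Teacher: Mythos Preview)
Your argument is correct. The paper itself states this corollary without proof, treating it as an immediate consequence of the uniform convergence in Lemma~\ref{He}; your proposal is precisely the natural way to fill in those details, via the elementary inclusion $E(Y)\subseteq[-M_Y,M_Y]^{q-1}$ together with compactness of $\overline{B_q}$ and the uniform convergence.
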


\begin{theorem}\label{psiBC}
The generalized spherical function associated to the root system of type $BC$ in the rational Dunkl setting is given as 
\begin{align*}
\psi_\lambda(X)&=\int_{B_q}\,\psi^A_\lambda(\exp((X\,w+w^*\,X)/2))\,dm_p(w)\,dw
\end{align*}
where  
$\psi^A_\lambda$ is the spherical function for the symmetric space of Euclidean type associated to $\GL_0(q,\F)$.
\end{theorem}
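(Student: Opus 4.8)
The plan is to obtain the formula for $\psi_\lambda$ by a limiting argument (``rational limit'' in the sense of de Jeu, \cite[Theorem 4.13]{DeJeu}) applied to the $BC$-type formula \eqref{phi} for $\phi_\lambda$. Concretely, recall that $\psi_\lambda(X)=\lim_{\epsilon\to0}\phi_{\lambda/\epsilon}(\epsilon X)$ (the standard passage from the trigonometric Dunkl setting to the rational one), and the analogous statement holds for the type $A$ spherical functions: $\psi^A_\mu(Y)=\lim_{\epsilon\to0}\phi^A_{\mu/\epsilon}(\epsilon Y)$. So first I would write \eqref{phi} at the point $\epsilon X$ with spectral parameter $\lambda/\epsilon$, giving
\begin{align*}
\phi_{\lambda/\epsilon}(\epsilon X)=\int_{B_q}\,\phi^A_{\lambda/\epsilon}(Z(\epsilon X,w))\,|\det Z(\epsilon X,w)|^{-d(p+1)/2+1}\,dm_p(w)\,dw,
\end{align*}
and analyze each factor as $\epsilon\to0$.

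The second step is the pointwise analysis of the integrand. Using the Cartan decomposition $Z(\epsilon X,w)=k_1(\epsilon)e^{X^\epsilon(w)}k_2(\epsilon)$ and the $W^A$-invariance of $\phi^A$, we have $\phi^A_{\lambda/\epsilon}(Z(\epsilon X,w))=\phi^A_{\lambda/\epsilon}(e^{X^\epsilon(w)})=\phi^A_{\lambda/\epsilon}\bigl(e^{\epsilon\,(X^\epsilon(w)/\epsilon)}\bigr)$. By Lemma \ref{He}, $X^\epsilon(w)/\epsilon\to a^A(\exp((Xw+w^*X)/2))$ uniformly in $w\in\overline{B_q}$, so by the rational-limit theorem this converges to $\psi^A_\lambda\bigl(a^A(\exp((Xw+w^*X)/2))\bigr)=\psi^A_\lambda(\exp((Xw+w^*X)/2))$ (again using $W^A$-invariance of $\psi^A$ to drop the $a^A$). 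Meanwhile $\det Z(\epsilon X,w)\to \det I_q=1$, so the factor $|\det Z(\epsilon X,w)|^{-d(p+1)/2+1}\to1$; and $dm_p(w)$ does not depend on $\epsilon$. Hence the integrand converges pointwise on $B_q$ to $\psi^A_\lambda(\exp((Xw+w^*X)/2))\,$ (against the measure $dm_p(w)\,dw$).

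The third step is to justify passing the limit under the integral sign. Here I would invoke dominated convergence: $|\phi^A_{\lambda/\epsilon}(e^{X^\epsilon(w)})|\le1$ in the geometric/group case (it is an average of unitary characters, cf.{} the defining integral for $\phi^A$), or more generally is bounded uniformly in $w$ and small $\epsilon$; the determinant factor $|\det Z(\epsilon X,w)|^{-d(p+1)/2+1}$ is continuous in $(\epsilon,w)$ on a neighbourhood of $\{0\}\times\overline{B_q}$ (note $Z(\epsilon X,w)=I+O(\epsilon)$ is invertible there, since $\det(Z_2)$ stays bounded away from $0$), hence bounded there; and $dm_p(w)$ is an honest finite measure on $B_q$ because $\Re p>2q-1$ makes the exponent $p\,d/2-d(q-1/2)-1$ strictly greater than $-1$. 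The Corollary following Lemma \ref{He} supplies the uniform control of the relevant $E$-sets, which is what is needed to make the convergence $\phi^A_{\lambda/\epsilon}(e^{X^\epsilon(w)})\to\psi^A_\lambda(\cdots)$ uniform in $w$ via Theorems \ref{old} and \ref{psidunkl}; combined with the bound on the other factor this gives a dominating function, and dominated convergence yields
\begin{align*}
\psi_\lambda(X)=\lim_{\epsilon\to0}\phi_{\lambda/\epsilon}(\epsilon X)=\int_{B_q}\,\psi^A_\lambda(\exp((Xw+w^*X)/2))\,dm_p(w)\,dw,
\end{align*}
which is the claim. (The case $q=1$ should be checked separately but is immediate, since then everything reduces to the scalar formula $\psi^A_\lambda(x)=e^{i\lambda(x)}$.)

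The main obstacle I anticipate is the interchange of limit and integral carried out uniformly in $w$: one needs the convergence $\phi^A_{\lambda/\epsilon}(\epsilon Y_\epsilon(w))\to\psi^A_\lambda(Y_0(w))$ to hold uniformly as $w$ ranges over $\overline{B_q}$, where $Y_\epsilon(w)=X^\epsilon(w)/\epsilon$ converges uniformly to $Y_0(w)=a^A(\exp((Xw+w^*X)/2))$. This is exactly where the Corollary after Lemma \ref{He} (a common compact $\tilde E(X)$ containing all the relevant integration domains) and the explicit integral representations in Theorems \ref{old}, \ref{psidunkl} are needed, so that de Jeu's rational-limit estimate can be made locally uniform in the $A$-variable and then composed with the uniform convergence $Y_\epsilon\to Y_0$. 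Everything else (the determinant factor, the finiteness of $m_p$ for $\Re p>2q-1$) is routine.
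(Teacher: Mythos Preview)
Your proposal is correct and follows essentially the same approach as the paper: apply the rational limit to \eqref{phi}, use Lemma \ref{He} for the pointwise behaviour of the integrand, and justify the interchange of limit and integral via dominated convergence with the help of the explicit representations in Theorems \ref{old} and \ref{psidunkl} together with de Jeu's theorem. The paper's execution differs only in technical detail: it works on the dense open set $U\subset B_q$ where $(Xw+w^*X)/2$ has distinct eigenvalues (so that Theorem \ref{old} applies directly and $X^\epsilon(w)\in(\a^A)^+$ for small $\epsilon$), thereby obtaining almost-everywhere rather than uniform convergence, and it derives the explicit dominating bound $|\phi^A_{\lambda/\epsilon}(\cdots)|\leq M+|\psi^A_\lambda(e^{(Xw+w^*X)/2})|$ via the change of variables of Remark \ref{intT} instead of appealing to $|\phi^A|\leq 1$ (which would only be available for real $\lambda$).
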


\begin{proof}
We will assume that $X\in C_q^+$ (the result follows for all $X\in\a$ by continuity and Weyl-invariance following a reasoning similar to the one in Remark \ref{WINV}). We first show that 
\begin{align}
\lim_{\epsilon\to0}\,\phi_{\lambda/\epsilon}^A(\cosh (\epsilon\,X)+\sinh (\epsilon\,X)\,w)
=\psi_\lambda^A(e^{(X\,w+w^*\,X)/2})\label{ae}
\end{align}
almost everywhere and that
\begin{align}
|\phi_{\lambda/\epsilon}^A(\cosh (\epsilon\,X)+\sinh (\epsilon\,X)\,w)|
\leq M+|\psi_\lambda^A(e^{(X\,w+w^*\,X)/2})|\label{bounded}
\end{align}
for some constant $M>0$ independent of $\epsilon$ and $w\in B_q$.
Write $a^A(\cosh (\epsilon\,X)+\sinh (\epsilon\,X)\,w)=X^\epsilon(w)=\diag[x_1^\epsilon(w),\dots,x_q^\epsilon(w)]$ and
let $\dot{X}^0(w)=\lim_{\epsilon\to0}\,X^\epsilon(w)/\epsilon=a^A\left(\exp\left(\frac{X\,w+w^*\,X}{2}\right)\right)=\diag[\dot{x}^0_1,\dots,\dot{x}^0_q]$ from Lemma \ref{He}.  The set $U=\{w\in B_q\colon \frac{X\,w+w^*\,X}{2}~\hbox{has distinct eigenvalues}\}$ is open and dense in $B_q$ by Corollary \ref{eigen} (taking $r=q$ and noting that $w=I_q/2\in U=U_q$).

For $w\in U$, provided $\epsilon$ is close enough to 0, the diagonal entries of $X^\epsilon(w)/\epsilon$ are distinct and therefore $X^\epsilon(w)\in(\a^A)^+$.
From Theorem \ref{old}, we have
\begin{align*}
\lefteqn{\phi^A_{\lambda/\epsilon}(\cosh (\epsilon\,X)+\sinh (\epsilon\,X)\,w)}\\
&=\frac{\Gamma(d\,q/2)}{(\Gamma(d/2))^q}\,
e^{i\,\lambda_q\,\sum_{k=1}^q\,x_q^\epsilon(w)/\epsilon}
\int_{E(X^\epsilon(w))}\, \phi^A_{\lambda_0/\epsilon}(e^\xi)\,S^{(d)}(\xi,X^\epsilon(w))\,d(\xi)^d\,d\xi\\
&=\frac{\Gamma(d\,q/2)}{(\Gamma(d/2))^q}
e^{i\,\lambda_q\,\sum_{k=1}^q\,x_q^\epsilon(w)/\epsilon}
\int_{E(X^\epsilon(w)/\epsilon)}\, \phi^A_{\lambda_0/\epsilon}(e^{\epsilon\,\eta})\,\left[\epsilon^{q-1}\,S^{(d)}(\epsilon\,\eta,X^\epsilon(w))
\,d(\epsilon\,\eta)^d\right]\,d\eta\\
&=\frac{\Gamma(d\,q/2)}{(\Gamma(d/2))^q}\,\int_{E(X^\epsilon(w)/\epsilon)}
\,F^{(d)}(\lambda_0,\epsilon,\eta,X^\epsilon(w)/\epsilon)
\,T^{(d)}(\eta,X^\epsilon(w)/\epsilon)\,d_0(\eta)\,d\eta
\end{align*}
where
\begin{align*}
F^{(d)}(\lambda_0,\epsilon,\eta,X^\epsilon(w)/\epsilon)
&=e^{i\,\lambda_q\,\sum_{k=1}^q\,x_q^\epsilon(w)/\epsilon}\,\prod_{i<j}\,\left(\frac{\sinh(x_i^\epsilon(w)-x_j^\epsilon(w))}
{\epsilon\,(x_i^\epsilon(w)/\epsilon-x_j^\epsilon(w)/\epsilon)}\right)^{1-d}
\,\prod_{i<j}\,\left(\frac{\sinh(\epsilon\,\eta_i-\epsilon\,\eta_j)}{\epsilon\,(\eta_i-\eta_j)}\right)^{1-d}
\\&\qquad
\cdot\left[\prod_{r=1}^{q-1}
\,\left(\prod_{s=1}^r\,\frac{\sinh(x_s^\epsilon(w)-\epsilon\,\eta_r)}{\epsilon\,(x_s^\epsilon(w)/\epsilon-\eta_r)}
\,\prod_{s=r+1}^q\,\frac{\sinh(\epsilon\,\eta_r-x_s^\epsilon(w))}{\epsilon\,(\eta_r-x_s^\epsilon(w)/\epsilon)}\right)\right]^{d/2-1}
\\&\qquad
\cdot\phi_{\lambda_0/\epsilon}^A(e^{\epsilon\,\eta})
\end{align*}
which converges uniformly to $e^{i\,\lambda_q\,\sum_{k=1}^q\,\dot{x}^0_q(w)}\,\psi_{\lambda_0}^A(e^\eta)$ on $\overline{B_q}\times \tilde{E}(X)$ as $\epsilon$ tends to 0 (\cite[Theorem 4.13]{DeJeu}).  

Using Theorem \ref{psidunkl}, it follows that 
\begin{align*}
\lefteqn{|\phi^A_{\lambda/\epsilon}(\cosh (\epsilon\,X)+\sinh (\epsilon\,X)\,w)-\psi_{\lambda_0}^A(e^{(X\,w+w^*\,X)/2})|}\\
&=\frac{\Gamma(d\,q/2)}{(\Gamma(d/2))^q}\,\left|\int_{E(X^\epsilon(w)/\epsilon)}
\,F^{(d)}(\lambda_0,\epsilon,\eta,X^\epsilon(w)/\epsilon) \,T^{(d)}(\eta,X^\epsilon(w)/\epsilon)\,d_0(\eta)^d\,d\eta
\right.\\&{}\qquad\left.
-e^{i\,\lambda_q\,\sum_{k=1}^q\,\dot{x}^0_q(w)}\,\int_{E(\dot{X}^0(w))}\,\psi_{\lambda_0}^A(e^\eta)\,T^{(d)}(\eta,\dot{X}^0(w)	)\,d_0(\eta)^d\,d\eta\right|\\
&\leq\frac{\Gamma(d\,q/2)}{(\Gamma(d/2))^q}\,\int_{E(X^\epsilon(w)/\epsilon)}
\,\left|F^{(d)}(\lambda_0,\epsilon,\eta,X^\epsilon(w)/\epsilon)-e^{i\,\lambda_q\,\sum_{k=1}^q\,\dot{x}^0_q(w)}\,\psi_{\lambda_0}^A(e^\eta)\right|
\\&\qquad\qquad\qquad\qquad\qquad\qquad\qquad\qquad \cdot T^{(d)}(\eta,X^\epsilon(w)/\epsilon)\,d_0(\eta)^d\,d\eta
\\&\qquad
+ \frac{\Gamma(d\,q/2)}{(\Gamma(d/2))^q}\,\left|e^{i\,\lambda_q\,\sum_{k=1}^q\,\dot{x}^0_q(w)}\right|\,\left|\int_{E(X^\epsilon(w)/\epsilon)}\,\psi_{\lambda_0}^A(e^\eta)\,T^{(d)}(\eta,X^\epsilon(w)/\epsilon)\,d_0(\eta)^d\,d\eta
\right.\\&\qquad\qquad\qquad\left.
      -\int_{E(\dot{X}^0(w))}\,\psi_{\lambda_0}^A(e^\eta)\,T^{(d)}(\eta,\dot{X}^0(w)	)\,d_0(\eta)^d\,d\eta\right|\\
&\leq M(\epsilon)\,\int_{E(X^\epsilon(w)/\epsilon)}\,T^{(d)}(\eta,X^\epsilon(w)/\epsilon)\,d_0(\eta)^d\,d\eta
\\ &\qquad
+C\,\left|\int_{\sigma}\,\psi_{\lambda_0}^A(e^{\eta(\beta(\epsilon))})\,(\beta_1\,\cdots\,\beta_q)^{d/2-1}\,d\beta
      -\int_{\sigma}\,\psi_{\lambda_0}^A(e^{\eta(\beta(0))})\,(\beta_1\,\cdots\,\beta_q)^{d/2-1}\,d\beta
		\right|\\
&\leq M(\epsilon)
+ C\,\int_{\sigma}\,\left|\psi_{\lambda_0}^A(e^{\eta(\beta(\epsilon))})-\psi_{\lambda_0}^A(e^{\eta(\beta(0))})\right|
\,(\beta_1\,\cdots\,\beta_q)^{d/2-1}\,d\beta
\end{align*}
(we used the change of variables of Remark \ref{intT} in the last inequalities) where $C>0$ is a constant independent of $\epsilon$ and $w\in\overline{B_q}$.

Now, $\lim_{\epsilon\to0}\,M(\epsilon)=0$ uniformly on $\overline{B_q}\times\tilde{E}(X)$ while $\exp(2\,\eta_i(\beta(\epsilon)))$, $i=1$, \dots, $q-1$, are the roots of the polynomial
$\sum_{r=1}^q\,\beta_r\,\prod_{i\not=r}\,(x-e^{2\,x_i^\epsilon(w)/\epsilon})$ and $\exp(2\,\eta_i(\beta(0)))$, $i=1$, \dots, $q-1$, are the roots of the polynomial 
$\sum_{r=1}^q\,\beta_r\,\prod_{i\not=r}\,(x-e^{2\,\dot{x}^0(w)_i})$.  From there, we conclude that the term 
\begin{align*}
\int_{\sigma}\,\left|\psi_{\lambda_0}^A(e^{\eta(\beta(\epsilon))})-\psi_{\lambda_0}^A(e^{\eta(\beta(0))})\right|
\,(\beta_1\,\cdots\,\beta_q)^{d/2-1}\,d\beta
\end{align*}
will also tend to 0 uniformly on $\overline{B_q}\times\sigma$ as $\epsilon$ tends to 0.  This proves \eqref{ae} and, noting that 
\begin{align*}
\lefteqn{|\phi^A_{\lambda/\epsilon}(\cosh (\epsilon\,X)+\sinh (\epsilon\,X)\,w)-\psi_{\lambda_0}^A(e^{(X\,w+w^*\,X)/2})|}\\
&\leq M(\epsilon)
+ C\,\int_{\sigma}\,\left|\psi_{\lambda_0}^A(e^{\eta(\beta(\epsilon))})-\psi_{\lambda_0}^A(e^{\eta(\beta(0))})\right|
\,(\beta_1\,\cdots\,\beta_q)^{d/2-1}\,d\beta,
\end{align*}
\eqref{bounded} follows.  Using \cite[Theorem 4.13]{DeJeu} once more, 
\begin{align*}
\psi_\lambda(X)&=\lim_{\epsilon\to0}\,\phi_{\lambda/\epsilon}(e^{\epsilon\,X})
\end{align*}
uniformly in $\lambda$ and $X$ over compact sets.  
Using \eqref{ae} and \eqref{bounded} and the fact that $\lim_{\epsilon\to0}\,X^\epsilon(w)=0$ uniformly on $\overline{B_q}$, the result follows easily by the dominated convergence theorem.
\end{proof}

The proof of the following theorem follows the same lines as in the trigonometric setting.

\begin{theorem}\label{main2}
Let $X\in\overline{C_q^+}$.  The support of the measure defined on $C_q$ by 
\begin{align*}
\mathcal{A}^*_0(f)(X)=\int_{B_q}\,(\mathcal{A}^A)^*_0(f)(\exp((X\,w+w^*\,X)/2)) \,dm_p(w)\,dw
\end{align*}
is $C(X)$.  Furthermore, if $X\not=0$, the measure $f\mapsto \mathcal{A}^*_0(f)(X)$ is absolutely continuous and its density is strictly positive on $C(X)^\circ$,
namely
\begin{align*}
K_0(H,X)&=\frac{1}{\kappa^{p\,d/2}}\,\int_{\widetilde{D_H(X)}}\,K_0^A(H,\dot{X}^0(w))\,\det(I-w^*\,w)^{p\,d/2-d\,(q-1/2)-1}\,dw
\end{align*}
where
\begin{align*}
\widetilde{D_H(X)}&=\{w\in \widetilde{B_q(X)}\colon H\in C^A(\dot{X}^0(w))^\circ\},\\
\widetilde{B_q(X)}&=\{w\in B_q\colon \hbox{$\dot{X}^0(w)\not\in \R\,I_q$ and $\left. d\widetilde{f_X}\right|_w$ is surjective} \},
\end{align*}
$\dot{X}^0(w)$ is the diagonal part of $(X\,w+w^*\,X)/2$ with decreasing diagonal entries and $\widetilde{f_X}\colon B_q\to \R$ is defined by 
$\widetilde{f_X}(w)=\tr\dot{X}^0(w)=\tr (X\,w+w^*\,X)/2$.
\end{theorem}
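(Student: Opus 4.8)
The plan is to mirror the proof in the trigonometric setting almost line for line, replacing the map $w\mapsto X(w)=a^A(Z(X,w))$ by the map $w\mapsto\dot X^0(w)=a^A\!\bigl(\exp((Xw+w^*X)/2)\bigr)$ throughout. First I would record the rational analogue of Proposition \ref{XC}, namely $C(X)=\cup_{w\in\overline{B_q}}C^A(\dot X^0(w))$: this is obtained by passing to the limit $\epsilon\to0$ in the identity of Proposition \ref{XC} applied to $\epsilon X$, using that $C(\epsilon X)=\epsilon\,C(X)$, that $a^A(Z(\epsilon X,w))/\epsilon\to\dot X^0(w)$ uniformly on $\overline{B_q}$ by Lemma \ref{He}, and that $C^A$ is continuous in its argument; alternatively one argues directly from $C^A(Y)=\{a^A(k_1e^Yk_2):k_i\in\U(q,\F)\}$ exactly as in Proposition \ref{XC}. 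The Weyl-invariance of $f\mapsto\mathcal{A}^*_0(f)(X)$ (Remark \ref{WINV}, which goes through verbatim since $\exp((s\cdot X\,w+w^*s\cdot X)/2)$ transforms the same way as $Z(s\cdot X,w)$) lets me reduce to $X\in\overline{C_q^+}\setminus\{0\}$, and the case $q=1$ is immediate as before since $\log|{\cosh(\epsilon x_1)}|$ is replaced directly by the linear function and $(Xw+w^*X)/2=x_1\Re w$ ranges over $[-x_1,x_1]$.

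Next I would establish the rational analogues of Lemma \ref{BqX}, Lemma \ref{special}, Proposition \ref{CC}: that $\widetilde{B_q(X)}$ is open and dense in $B_q$ (the surjectivity of $d\widetilde{f_X}$ is a single determinant being nonzero, checked on a point of $\widetilde{\mathcal U}(X)$ with $\widetilde{f_X}(w)=\log\det\bigl(\tfrac{1}{2}(Xw+w^*X)\bigr)$-type computation, and the two-distinct-eigenvalue condition comes from Corollary \ref{eigen} applied to $f(w)=(Xw+w^*X)/2$, which is analytic and not identically a multiple of the identity when $X\neq0$), and then that $\cup_{w\in\widetilde{B_q(X)}}C^A(\dot X^0(w))^\circ$ is exactly $C(X)^\circ$ intersected with the fundamental domain, up to Weyl-invariance. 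Here the only genuinely new input is the elementary $2\times2$ perturbation argument of Lemma \ref{special}: one checks that adding $bE_{1,q}$ to a suitable $w_0$ with $\dot X^0(w_0)=\diag[u_1,\dots,u_q]$ spreads $u_1$ and $u_q$ apart, which amounts to computing the eigenvalues of a $2\times2$ symmetric matrix obtained from $(X(w_0+bE_{1,q})+(w_0+bE_{1,q})^*X)/2$ — still elementary. With these in hand, the support statement follows by the density argument of Theorem \ref{main} (approximate $H_0\in C(X)$ by $H\notin\R I_q$ in $C(X)$, write $H$ as a convex combination of $W^A$-translates of some $\dot X^0(w)$, perturb $w$ into $\widetilde{B_q(X)}$ using continuity of $w\mapsto\dot X^0(w)$ and the fact that $\overline{C^A(\dot X^0(w'))^\circ}=C^A(\dot X^0(w'))$ once $\dot X^0(w')\notin\R I_q$).

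For the absolute continuity and the explicit formula for $K_0$, I would substitute the rational type-$A$ Laplace-type kernel $K_0^A$ (from \cite[Theorem 2.3]{Sawyer1}, valid and strictly positive on $C^A(Y)^\circ$ for $Y\notin\R I_q$) into the defining integral for $\mathcal{A}^*_0(f)(X)$, restrict the outer integral to $\widetilde{B_q(X)}$ (a full-measure open subset), write $(\mathcal{A}^A_0)^*(f)(\exp((Xw+w^*X)/2))=\int_{C^A(\dot X^0(w))^\circ}f(H)K_0^A(H,\dot X^0(w))\,dH$, and then exchange the order of integration by Fubini/Tonelli, producing $K_0(H,X)=\tfrac{1}{\kappa^{pd/2}}\int_{\widetilde{D_H(X)}}K_0^A(H,\dot X^0(w))\det(I-w^*w)^{pd/2-d(q-1/2)-1}\,dw$; the positivity of $K_0$ on $C(X)^\circ\cap\overline{C_q^+}$ (hence on all of $C(X)^\circ$ by Weyl-invariance) comes, exactly as in Theorem \ref{KHX}, from the fact that $\widetilde{D_H(X)}$ is the nonempty intersection of an open set with the submanifold $\{w:\widetilde{f_X}(w)=\tr H\}$, which carries positive measure in the induced coordinates. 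The main obstacle I anticipate is purely bookkeeping: verifying carefully that the coarea/submanifold-integration step is legitimate — i.e.\ that $\widetilde{D_H(X)}$ is a genuine hypersurface of the right dimension with a smooth surface measure against which $K_0^A(H,\dot X^0(\cdot))$ is integrated — and that the Fubini exchange is justified by nonnegativity; these are the same points that required care in the trigonometric proof, and no new idea beyond Lemma \ref{He} and Corollary \ref{eigen} is needed.
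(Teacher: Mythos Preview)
Your plan is correct and follows essentially the same route as the paper, which explicitly declares its proof to be ``very similar to the one of Theorem~\ref{KHX}'' and only sketches the rational analogue of Proposition~\ref{XC} (obtained there by a direct Cartan-motion-group computation of $\pi_\a(k\cdot X)$ together with an appeal to \cite{Lu}, rather than your limiting argument via Lemma~\ref{He}). One small simplification you are overcomplicating: $\widetilde{f_X}(w)=\tr(Xw+w^*X)/2$ is \emph{linear} in $w$, so $d\widetilde{f_X}$ is constant and automatically surjective on all of $B_q$ whenever $X\neq0$ --- no log-det-type check at a special point is needed.
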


\begin{proof}
The proof is very similar to the one of Theorem \ref{KHX}.  We go over some of the differences. Recall that in the trigonometric setting,
\begin{align*}
\psi_\lambda(X)&=\int_K\,e^{i\,\langle \lambda,\Ad(k)\,X\rangle}\,dk
=\int_K\,e^{i\,\lambda(\pi_\a(\Ad(k)\,X))}\,dk
\end{align*}
where $\pi_\a\colon\p\to\a$ is the orthogonal projection with respect to the Killing form.  For the Lie groups $\SO_0(p,q)$, $\SU(p,q)$ and $\Sp(p,q)$, $p>q$, we have
\begin{align*}
\pi_\a(k\cdot X)&=\pi_\a\left(
\left[\begin{array}{cc}U&0\\0&V\end{array}\right]\,\left[\begin{array}{cc}0&D_X\,Q^T\\Q\,X&0\end{array}\right]\,\left[\begin{array}{cc}U&0\\0&V\end{array}\right]^*
\right)\\
&=\pi_\a\left(
\left[\begin{array}{cc}0&U\,X\,Q^T\,V^*\\V\,Q\,X\,U^*&0\end{array}\right]
\right)=\pi_\a\left(
\left[
\begin{array}{cc}
0&U\,X\,\lbrack A^*,C^*\rbrack\\
\lbrack A^*,C^*\rbrack\,X\,U^*&0
\end{array}
\right]
\right)\\
&=\pi_\a\left(
\left[\begin{array}{cc}0&H\,Q^T\\Q\,H&0\end{array}\right]
\right)
\end{align*}
where $Q^T=\left[\begin{array}{cc}I_q&0_{q\times(p-q)}\end{array}\right]$, $V=\left[\begin{array}{cc}A&B\\C&D\end{array}\right]$,
\begin{align*}
H&=\pi_{\a^A}(U\,X\,A^*)=\pi_{\a^A}((U\,X\,A^*+A\,X\,U^*)/2)=\pi_{\a^A}(U\,\frac{X\,(U^*\,A)^*+(U^*\,A)\,X}{2}\,U^*)\\
&=\pi_{\a^A}(U\,\frac{X\,w+w^*\,X}{2}\,U^*)=\pi_{\a^A}(U\,\dot{X}^0(w)\,U^*),
\end{align*}
and $\pi_{\a^A}(g)$ is the matrix made of the diagonal of $g$ and $w\in B_q$.  Now, the set
$\{\pi_{\a^A}(U\,\dot{X}^0(w)\,U^*)\colon U\in \U(q,\F)\}$ is the support of the measure $f\mapsto (\mathcal{A}_0^A)^*(X)$ which is equal to $C^A(\dot{X}^0(w))$ by \cite{Lu} .  These considerations lead us to $C(X)=\cup_{w\in \overline{B_q}}C^A(\dot{X}^0(w))$.  Hence, Proposition \ref{XC} (which relied also on the group case), Theorem \ref{main} and Proposition \ref{CC} have their counterparts from which the current result follows.
\end{proof}

\begin{corollary}
Let $X\in C_q^+$ and let $f\mapsto V_X(f)=\int_{C_q}\,f(H)\,d\mu_X(H)$ be the Dunkl intertwining operator.  Then the support of $V$ is also $C(X)$.
\end{corollary}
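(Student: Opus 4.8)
The plan is to connect the Dunkl intertwining operator $V$ to the dual Abel transform $\mathcal{A}_0^*$ in the rational setting, exactly as one does in the geometric case, and then invoke Theorem \ref{main2}. First I would recall that the generalized spherical function $\psi_\lambda$ is obtained by Weyl-averaging the Dunkl kernel $E(\lambda,\cdot)$, i.e.\ $\psi_\lambda(X)=J(\lambda,X)=\frac{1}{|W|}\sum_{s\in W}E(\lambda,s\cdot X)=\frac{1}{|W|}\sum_{s\in W}V_{s\cdot X}(e^{\langle\lambda,\cdot\rangle})$, so that
\begin{align*}
\psi_\lambda(X)=\int_{C_q}\,e^{\langle\lambda,H\rangle}\,d\overline{\mu}_X(H),\qquad \overline{\mu}_X=\frac{1}{|W|}\sum_{s\in W}\mu_{s\cdot X}.
\end{align*}
On the other hand, $\mathcal{A}_0^*(e^{i\langle\lambda,\cdot\rangle})=\psi_\lambda$ with $\mathcal{A}_0^*(f)(X)=\int_{C_q}f(H)\,K_0(H,X)\,dH$ by Theorem \ref{main2}. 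Comparing the two integral representations (using that characters $e^{\langle\lambda,\cdot\rangle}$, or equivalently their Laplace/Fourier transform content, separate measures on the relevant space) gives $\overline{\mu}_X=K_0(\cdot,X)\,dH$ as measures; hence the support of $\overline{\mu}_X$ equals the support of $K_0(\cdot,X)$, which is $C(X)$ by Theorem \ref{main2}.

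Next I would descend from the symmetrized measure $\overline{\mu}_X$ to $\mu_X$ itself. Since $\mu_X$ is a positive measure whose support is already known to contain $W\cdot X$ and to be contained in $C(X)$ (stated in the Introduction, from \cite{Anker}), and since $C(X)$ is the convex hull of $W\cdot X$, the averaged measure $\overline{\mu}_X$ also has support inside $C(X)$; conversely $\operatorname{supp}\overline{\mu}_X=C(X)$ forces $\bigcup_{s\in W}\operatorname{supp}\mu_{s\cdot X}$ to be dense in $C(X)$, hence equal to $C(X)$ by closedness. Because $\operatorname{supp}\mu_{s\cdot X}=s\cdot\operatorname{supp}\mu_X$ (this is immediate from $E(s\cdot\lambda,s\cdot X)=E(\lambda,X)$ together with the $W$-covariance of $V$, or directly from the transformation law of $J$ in Figure \ref{Dunkl}) and $C(X)$ is $W$-invariant, the union $\bigcup_{s\in W}s\cdot\operatorname{supp}\mu_X=C(X)$ together with $\operatorname{supp}\mu_X\subseteq C(X)$ shows that $\operatorname{supp}\mu_X$ is a closed $W$-saturating subset of $C(X)$ whose $W$-orbit is all of $C(X)$. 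Restricting to $X\in C_q^+$ and using that $\operatorname{supp}\mu_X$ meets every chamber's closure only through its intersection with $\overline{C_q^+}$, one concludes $\operatorname{supp}\mu_X\cap\overline{C_q^+}=C(X)\cap\overline{C_q^+}$, and by the preceding that $\operatorname{supp}\mu_X=C(X)$.

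The main obstacle I anticipate is justifying the identification $\overline{\mu}_X=K_0(\cdot,X)\,dH$ cleanly: one must argue that the family of functions $\{e^{\langle\lambda,\cdot\rangle}:\lambda\in\a_\C^*\}$ separates finite positive measures supported in the (compact) set $C(X)$, which is standard (analyticity, or a Stone--Weierstrass/Laplace-transform injectivity argument on a compact set), but needs to be stated. A secondary subtlety is passing from the symmetrized support to the genuine support of $\mu_X$: here one should be careful that a priori different $s\cdot X$ could contribute overlapping pieces, but since all are $W$-translates of one set and $C(X)$ is $W$-invariant and convex, no information is lost. I would close by remarking that this gives, as in the trigonometric case, a Laplace-type formula $\psi_\lambda(X)=\int_{C(X)^\circ}e^{\langle\lambda,H\rangle}K_0(H,X)\,dH$ with $K_0(\cdot,X)>0$ on $C(X)^\circ$ whenever $X\neq0$, completing the analogy with the geometric setting.
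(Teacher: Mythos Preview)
Your first half is sound and is essentially the route the paper takes in spirit: you correctly identify the Weyl-symmetrized intertwining measure $\overline{\mu}_X=\frac{1}{|W|}\sum_{s\in W}\mu_{s\cdot X}$ with the kernel measure $K_0(\cdot,X)\,dH$ of $\mathcal{A}_0^*$, and the injectivity of the Laplace/Fourier transform on compactly supported measures handles the identification cleanly. So far so good; this gives $\operatorname{supp}\overline{\mu}_X=C(X)$ via Theorem~\ref{main2}.

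The gap is in the descent from $\overline{\mu}_X$ to $\mu_X$. From $\bigcup_{s\in W}s\cdot\operatorname{supp}\mu_X=C(X)$ you \emph{cannot} conclude $\operatorname{supp}\mu_X=C(X)$ without knowing that $\operatorname{supp}\mu_X$ is itself $W$-invariant, and nothing you have written establishes that. Your sentence ``$\operatorname{supp}\mu_X$ meets every chamber's closure only through its intersection with $\overline{C_q^+}$'' is neither justified nor meaningful as stated: the measure $\mu_X$ is \emph{not} $W$-invariant (this is precisely the difference between $E$ and $J$), and there is no a~priori reason for its support to sit in a single chamber, nor for its intersection with $\overline{C_q^+}$ to determine it. A set $S\subseteq C(X)$ satisfying $\bigcup_{s}s\cdot S=C(X)$ could, in principle, be a single fundamental domain $C(X)\cap\overline{C_q^+}$, so the orbit condition alone is too weak.

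The paper closes exactly this gap by citing an external result: in \cite{Sawyer1} the author showed, using a theorem of Rejeb \cite[Theorem 2.9]{Rejeb} (also in \cite{Gallardo}), that $\operatorname{supp}\mu_X$ coincides with the support of $f\mapsto\mathcal{A}_0^*(f)(X)$ for \emph{any} root system. That result (in effect, the $W$-invariance of $\operatorname{supp}\mu_X$) is a genuine, nontrivial input that your argument is missing. If you add a reference to the Gallardo--Rejeb support property at the point where you pass from $\overline{\mu}_X$ to $\mu_X$, your proof becomes correct and then matches the paper's.
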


\begin{proof}
In \cite{Sawyer1} we showed, using a result from the doctoral thesis of C.{} Rejeb \cite[Theorem 2.9]{Rejeb} (also found in the paper \cite{Gallardo}) that the support of the measure $f\mapsto V_X(f)$ is the same as the support of the measure $f\mapsto \mathcal{A}^*(f)(X)$ (this proof did not rely on the specific root system).  The rest follows from the theorem.
\end{proof}

\begin{remark}
Given that  Trim\`eche has shown that the intertwining operator can be defined in the trigonometric setting, the corresponding result is also valid: for the root systems studied in this paper, the support of the measure $f\mapsto V_X(f)$ is also $C(X)$ in the trigonometric setting (refer to  \cite{Trimeche3, Trimeche4}).
\end{remark}

\section{Conclusion}

In this paper and in \cite{Sawyer1}, we have proved for specific root systems that the dual of the Abel transform, $H\mapsto \mathcal{A}^*(f)(X)$ has support exactly equal to $C(X)$ and that the same holds for the intertwining operator $V$ in the rational Dunkl setting.  We also now know that this transform has a kernel provided $X\not=0$.

These results were possible because we had an iterative formula for the generalized spherical function in the case of root systems of type $A$ and a formula ``reducing'' the problem to the root systems of type $A$ in the case of the root systems of type $BC$.

The drawback of this approach is that unless similar formulas are derived for the other root systems, it is not easily generalizable.

Another question of interest would be to see if \eqref{phi} and the results of this paper can be generalized in a setting where $d$ is no longer restricted to 1, 2 or 4.

\subsection*{Acknowledgments}
This research was partly supported by funding from Laurentian University.


\begin{thebibliography}{HD}

%% Use the widest label as parameter.
%% Reference items can be numbered or have labels of your choice, as below.

%% In IMPAN journals, only the title is italicized; boldface is not used.
%% Our software will add links to many articles; for this, enclosing volume numbers in { } is helpful
%% Do not give the issue number unless the issues are paginated separately.

%%%%%%%%%%% To ease editing, use normal size:

\normalsize
\baselineskip=17pt

%%%%%%%%%%%%%


\bibitem{Anker} J.{}-P.{} Anker, \textit{An Introduction to Dunkl Theory and its Analytic Aspects}, Analytic, Algebraic and Geometric Aspects of Differential Equations (B\k{e}dlewo, Poland, September 2015), G.{} Filipuk, Y.{} Haraoka \& S.{} Michalik (eds.), Trends Math., Birkh\"auser, 1--41, 2015 
(https://arxiv.org/pdf/1611.08213.pdf).

\bibitem{DeJeu} M.{} de Jeu. \textit{Paley-Wiener theorems for the Dunkl transform\/}, Trans.{} Amer.{} Math.{} Soc.{} 
358 (2006), no. 10, 4225--4250.

\bibitem{Gallardo} L.{} Gallardo and C. Rejeb.  {\em Support properties of the intertwining and the mean value operators in
Dunkl's analysis\/}, preprint [hal-01331693], 2016, 1--10.

\bibitem{Gantmacher} F.\ R.\ Gantmacher \textit{The theory of
matrices\/}, Vol.\ 2, Chelsea Publishing Company, 1959.

\bibitem{Helgason1} Helgason, S. \textit{Differential Geometry, Lie Groups and Symmetric spaces}, 
Graduate Studies in Mathematics, 34, American Mathematical Society, Providence, RI, 2001.

\bibitem{Helgason2} Helgason, S. \textit{Groups and geometric analysis. Integral geometry, invariant differential operators, and spherical functions},
Mathematical Surveys and Monographs, 83, American Mathematical Society, Providence, RI, 2000.

\bibitem{Lu} Lu, J.-H. and Ratiu, T. \textit{On the nonlinear convexity theorem of Kostant}, Journal of the American Mathematical Society
Volume 4, Number 2, April 1991

\bibitem{Opdam1} E.{} M.{} Opdam.  
\textit{Lecture notes on Dunkl operators for real and complex reflection groups. With 
a preface by Toshio Oshima}, MSJ Memoirs, 8. Mathematical Society of Japan, Tokyo, 2000.

\bibitem{Rado} R.{} Rado. \textit{An inequality}, J.{} London Math.{} Soc.{} 27, (1952), 1--6.

\bibitem{Rejeb} C.{} Rejeb. \textit{Harmonic and subharmonic functions associated to root systems}, Mathematics [math]. Universit\'e Fran\c{c}ois-Rabelais de Tours, Universit\'e de Tunis El Manar, 2015.

\bibitem{Roesler1} M.{} R\"osler. \textit{Positivity of Dunkl's intertwining operator}, Duke Math.{} J.{} 98 (1999), no. 3, 445--463.

\bibitem{Roesler3} M.{} R\"osler. \textit{Dunkl operators: Theory and applications}, Orthogonal Polynomials and Special Functions,
Vol.{} 1817, Lecture Notes in Mathematics, Koelink, Erik, Van Assche, Walter (Eds.), 93--135, 2003.

\bibitem{Roesler2} M.{} R\"osler and M.{} Voit. \textit{Positivity of Dunkl's intertwining operator via the trigonometric 
setting}, Int.{} Math.{} Res.{} Not. 2004, no.{} 63, 3379--3389.

\bibitem{Voit} M.{} R\"osler and M.{} Voit. \textit{Integral Representation and Uniform Limits for Some Heckman-Opdam Hypergeometric Functions of type $BC$}, Transactions of the American Mathematical Society, Vol.{} 368, No.{} 8, 6005--6032, 2016.

\bibitem{Sawyer1} P.{} Sawyer, \textit{A Laplace-type representation of the generalized spherical functions associated to the root systems of type $A$}, 1--17, 2017, to appear in The Mediterranean Journal of Mathematics. 

\bibitem{Sawyer2} P.{} Sawyer, \textit{Spherical functions on $\SO_0(p, q)/\SO(p) \times \SO(q)$}, 
Canadian Bulletin of Mathematics,   Vol.{} 42, No.{} 4,   486--498, 1999.

\bibitem{Sawyer3} P.{} Sawyer, \textit{Spherical functions on symmetric cones}, Transactions of the American Mathematical
Society, 349, 3569--3584, 1997.

\bibitem{Trimeche1} K.{} Trim\`eche, \textit{The harmonic analysis associated to the Heckman-Opdam theory and its
application to a some root system of type $BC_d$}, preprint, 1--41, 2016.

\bibitem{Trimeche2} K.{} Trim\`eche, \textit{The positivity of the transmutation operators associated to the Cherednik operators for the root system  $BC_2$}, Math.{} J.{} Okayama Univ.{} 58, 183–198, 2016.

\bibitem{Trimeche3} K.{} Trim\`eche, \textit{Positivity of the transmutation operators and absolute continuity of their representing measures for
a root system on $\R^d$}, International Journal of Applied Mathematics, Vol.{} 28, No.{} 4, 427--453, 2015.

\bibitem{Trimeche4} K.{} Trim\`eche, \textit{The trigonometric Dunkl intertwining operator and its dual associated with the Cherednik operators
and the Heckman-Opdam theory}, Adv.{} Pure Appl.{} Math.{} 1, 293--323, 2010.

\end{thebibliography}
\end{document}